\documentclass{amsart}
\usepackage{amsmath,amssymb,amsthm,mathtools}
\usepackage{enumitem}
\usepackage{hyperref}

\usepackage{nicematrix}
\usepackage{xcolor}

\newtheorem{theorem}{Theorem}[section]
\newtheorem{lemma}[theorem]{Lemma}
\newtheorem{proposition}[theorem]{Proposition}
\newtheorem{corollary}[theorem]{Corollary}
\numberwithin{equation}{section}

\theoremstyle{definition}
\newtheorem{definition}[theorem]{Definition}
\newtheorem{example}[theorem]{Example}

\theoremstyle{remark}
\newtheorem{remark}[theorem]{Remark}

\newcommand{\Z}{\mathbb Z}
\newcommand{\C}{\mathbb C}
\newcommand{\R}{\mathbb R}
\newcommand{\Q}{\mathbb Q}

\newcommand{\cC}{\mathcal C}

\newcommand{\cM}{\mathcal M}
\newcommand{\fF}{\mathfrak F}
\newcommand{\RZ}{\mathbb R\mathcal Z}
\newcommand{\bj}{\mathbf j}

\newcommand{\one}{\mathbf 1}
\newcommand{\field}{\Bbbk}
\newcommand{\genus}{\mathfrak g}
\newcommand{\etale}{\'etale }

\DeclareMathOperator{\supp}{supp}
\DeclareMathOperator{\row}{row}
\DeclareMathOperator{\im}{im}
\DeclareMathOperator{\Hom}{Hom}

\title{Symplectic and projective small covers over products of polygons}

\author[S. Choi]{Suyoung Choi}
\address{Department of Mathematics, Ajou University, 206, World cup-ro, Yeongtong-gu, Suwon 16499, Republic of Korea}
\email{schoi@ajou.ac.kr}

\date{\today}
\thanks{This work was supported by the National Research Foundation of Korea Grant funded by the Korean Government (RS-2025-00521982).}

\keywords{small cover, real moment-angle manifold, symplectic structure, products of polygons, factor-compatible small cover, Hodge diamond}

\subjclass[2020]{Primary 57S12, 53D05; Secondary 14M25, 52B11, 14C30}

\begin{document}

\begin{abstract}
We study symplectic and projective structures on small covers over products of polygons.
We introduce the factor-compatible class for small covers over products of polygons and prove that every factor-compatible small cover admits a smooth projective model as a finite quotient of a product of curves.
Furthermore, we show that the graded mod~$2$ cohomology ring determines the Hodge diamond of the associated projective model.
We also prove that every factor-compatible small cover admits an iterated equivariant bundle structure.
\end{abstract}
\maketitle


\section{Introduction}
A small cover over an $n$-dimensional simple polytope $P$, introduced by Davis and Januszkiewicz~\cite{Davis-Januszkiewicz1991}, is a closed smooth $n$-manifold equipped with a locally standard action of an elementary abelian $2$-group of rank $n$ whose orbit space is $P$.
It may be regarded as a real analogue of a quasitoric manifold. 
More concretely, if $P$ is an $n$-dimensional simple polytope and $\lambda$ is a mod~$2$ characteristic map over~$P$, then the corresponding small cover $M(P,\lambda)$ is obtained as a free quotient of the real moment-angle manifold. 
Thus the topology of a small cover is encoded by the combinatorics of $P$ together with the linear data $\lambda$.

The purpose of this paper is to study when small covers over products of polygons admit symplectic, K\"ahler, or projective structures, focusing on a natural class defined by factorwise compatibility conditions.
A symplectic manifold must be even-dimensional and cohomologically symplectic, but for small covers these conditions are quite restrictive. 
One of the most important examples is given by real Bott manifolds. 
A real Bott manifold is a small cover over a cube. 
Ishida~\cite{Ishida2011} proved that, within the real Bott class, the cohomologically symplectic, symplectic, and K\"ahler conditions are equivalent. 
In particular, symplectic real Bott manifolds form a distinguished subclass of small covers over cubes.

An even-dimensional real Bott manifold is a small cover over an even-dimensional cube. 
Equivalently, its orbit space can be viewed as a product of squares. 
This suggests a natural direction of generalization: replace the square factors by arbitrary polygons and study small covers over products of polygons
$$
    P=P_{m_1}\times\cdots\times P_{m_n}.
$$
The real moment-angle manifold of $P$ is a product of closed surfaces. 
Hence the problem becomes to understand when the quotient of such a product of surfaces by a free elementary abelian $2$-group action admits a symplectic, K\"ahler, or projective structure.

The main notion introduced in this paper is that of a factor-compatible small cover over a product of polygons. 
Roughly speaking, the condition separates the polygon factors into two types. 
For a non-square factor, it requires the quotient action to preserve the orientation of the corresponding surface factor. 
For the square factors, it requires the opposite-pair weights to be paired in a way compatible with a complex torus structure. 
When all factors are squares, this condition recovers Ishida's symplectic condition for real Bott manifolds.
Thus factor-compatibility may be viewed as a polygonal analogue of the symplectic real Bott condition.

Our first main result is that factor-compatibility is sufficient for the existence of strong geometric structures. 
More precisely, we prove that every factor-compatible small cover over a product of polygons admits a smooth projective model. 
The model is obtained as a finite free quotient
$$
    X=(\cC_1\times\cdots\times \cC_n)/G
$$
of a product of smooth projective curves. 
In particular, every factor-compatible small cover is K\"ahler, symplectic, and cohomologically symplectic. 
This is proved in Theorem~\ref{thm:factor-compatible-projective}.

In dimension four, the converse is known for small covers over products of two polygons~\cite{Choi2026}. 
These results suggest that factor-compatibility captures a substantial and geometrically natural part of the symplectic range in this setting, although a complete converse is not known in higher dimensions.
At present, no symplectic small cover over a product of polygons is known to us that is not factor-compatible. 
In this paper we record some elementary obstructions. 
For example, the presence of a triangle factor prevents any small cover over the product from being cohomologically symplectic. 
We also show that if all polygon factors have an odd number of sides, then no small cover over the product is orientable.

Factor-compatibility is not merely a sufficient condition for symplecticity.
It is a condition that simultaneously produces a projective quotient model, makes the Hodge theory computable, and forces a tower structure.

For a factor-compatible small cover, let $X$ be its smooth projective model. 
We prove that the Hodge polynomial of $X$ is determined by certain character-theoretic invariants of the $G$-action on the curve factors.
A consequence is that the rational Betti numbers of $X$ determine its full Hodge diamond. 
Combining this with the mod~$2$ cohomology ring of the underlying small cover, we show that the graded mod~$2$ cohomology ring determines the Hodge diamond of the associated projective model; see Theorem~\ref{thm:mod2-rigidity-factor-compatible}. 
This result should be understood not as a full cohomological rigidity statement of the diffeomorphism type, but as a rigidity phenomenon for the Hodge-theoretic information carried by the projective models.

Finally, we prove a structural result for factor-compatible small covers. 
After a natural regrouping of the square factors, the characteristic matrix of a factor-compatible small cover can be put into block lower triangular form, up to Davis--Januszkiewicz equivalence. 
As a consequence, every factor-compatible small cover admits an iterated equivariant bundle structure whose successive fibers are orientable small covers over polygons; see Corollary~\ref{cor:iterated-equivariant-bundle}.
This gives a surface-type analogue of the iterated bundle structure of generalized real Bott towers.

The paper is organized as follows. 
In Section~\ref{sec-preliminaries} we recall real moment-angle manifolds, small covers, and the cohomological formulas used throughout the paper. 
In Section~\ref{sec-products} we introduce factor-compatible small covers and construct their smooth projective models. 
Section~\ref{sec-symplectic-obstructions} contains elementary obstructions to symplecticity. 
In Section~\ref{sec-hodge-mod2} we compute the Hodge polynomial of the projective models and prove the mod $2$ cohomological determination of the Hodge diamond.
In Section~\ref{sec-block-triangular} we prove the block triangular form and the resulting iterated equivariant bundle structure.

\section{Preliminaries}\label{sec-preliminaries}

We begin by fixing notation and recalling the facts used in the paper.
We write
$$
    \Z_2=\Z/2\Z=\{0,1\}.
$$
All vector spaces over $\Z_2$ are written additively.
For a ring $\field$ and a topological space $X$, we denote by $H^i(X;\field)$ its $i$th cohomology group with coefficients in~$\field$, and 
put $b_i(X;\field)=\dim_\field H^i(X;\field)$.
When $\field=\Q$, we write $b_i(X)$.

\subsection{Real moment-angle manifolds and small covers}

Let $P$ be a simple polytope of dimension~$n$ with the facet set $\fF(P)=\{F_1,\ldots,F_m\}$.
For a point $p \in P$, let $F(p)$ denote the unique face whose relative interior contains $p$.

Let $e_1,\ldots,e_m$ be the standard basis of $\Z_2^m$, indexed by the facets of~$P$.
The \emph{real moment-angle manifold}~$\RZ_P$ of~$P$ is defined by
$$
    \RZ_P=(\Z_2^m\times P)/\sim_P,
$$
where $(g,p)\sim_P(h,p)$ if and only if $g+h\in \langle e_i\mid F(p)\subset F_i\rangle$.
The group $\Z_2^m$ acts on~$\RZ_P$ by $h\cdot[g,p]=[h+g,p]$ for $h \in \Z_2^m$.
The orbit space of this action is naturally identified with~$P$.

A \emph{mod~$2$ characteristic map} over $P$ is a map $\lambda\colon \Z_2^m \to \Z_2^n$ satisfying the following nonsingularity condition.
\begin{quote}
    Whenever $F_{i_1}\cap\cdots\cap F_{i_k}\neq\varnothing$, the vectors $\lambda(e_{i_1}),\ldots,\lambda(e_{i_k})$ are linearly independent over $\Z_2$.
\end{quote}
We denote its row space by $\row(\lambda)\subset \Z_2^m$.

Let $\lambda$ be a characteristic map over $P$.
Then $G=\ker\lambda \subset \Z_2^m$ acts freely on~$\RZ_P$; see \cite[Lemma~3.1]{Choi-Kaji-Theriault2017}.
The \emph{small cover} determined by $(P,\lambda)$ is 
$$
    M(P,\lambda) = \RZ_P/G.  
$$
Since $\lambda$ is surjective, we have $\Z_2^n \cong \Z_2^m / G$.
Thus $\Z_2^n$ acts naturally on $M(P,\lambda)$, and the orbit space is again~$P$.

Conversely, every small cover over $P$ arises from a characteristic map over $P$ in this way~\cite{Davis-Januszkiewicz1991}.
Hence a small cover over $P$ may be viewed as a free finite quotient of~$\RZ_P$.
 
Two small covers over the same polytope $P$ are called \emph{Davis--Januszkiewicz equivalent}, or \emph{D--J equivalent}, if there is a weakly $\Z_2^n$-equivariant homeomorphism between them covering the identity map on~$P$.
Changing the basis of $\Z_2^n$ corresponds to applying elementary row operations to the matrix~$\lambda$.
Therefore elementary row operations do not change the small cover up to D--J equivalence.

Let $v_i\in H^1(M(P,\lambda);\Z_2)$ be the degree-one class corresponding to the facet $F_i$.
The Davis--Januszkiewicz formula~\cite{Davis-Januszkiewicz1991} gives 
\begin{equation}\label{eq:mod2cohom_smallcover}
    H^\ast(M(P,\lambda);\Z_2) \cong \Z_2[v_1,\ldots,v_m]/(I_P+J_\lambda).
\end{equation}
Here $I_P$ is the Stanley--Reisner ideal of $P$.
It is generated by the square-free monomials $v_{i_1}\cdots v_{i_r}$ such that $F_{i_1}\cap\cdots\cap F_{i_r}=\varnothing$.
The ideal $J_\lambda$ is generated by the linear relations
$$
    \sum_{i=1}^m \lambda_{ji}v_i, \qquad j=1,\ldots,n.
$$

For a vector $\omega = (\omega_1, \ldots, \omega_m) \in \Z_2^m$, put $\supp(\omega) = \{i \mid \omega_i =1\} \subset \{ 1, \ldots, m\}$.
We define 
$$
    P_\omega = \bigcup_{i \in \supp(\omega)} F_i.
$$
When $\omega = 0$, this means $P_\omega = \varnothing$.
When $\omega=\one_m = (1,\ldots,1)$, this means $P_\omega = \partial P$.
The rational cohomology groups of $\RZ_P$ and $M(P,\lambda)$ are deeply related to the reduced cohomology groups of $P_\omega$; see~\cite{Choi-Park2020}.

The $\Z_2^m$-equivariant Hochster decomposition for real moment-angle manifolds is
\begin{equation}\label{eq:rational_additive_formula_RZP}
    H^q(\RZ_P;\Q) \cong \bigoplus_{\omega\in \Z_2^m} \widetilde H^{q-1}(P_\omega;\Q) \otimes \Q_\omega.
\end{equation}
where $\Q_\omega$ is the one-dimensional sign representation of character $\omega$.
Here, we use the convention $\widetilde{H}^{-1}(\emptyset;\Q)=\Q$ and $\widetilde{H}^{-1}(X;\Q)=0$ for nonempty $X$.
Furthermore, for a small cover $M(P,\lambda)$, it is shown in~\cite{Choi-Park2017Cohomology} that
\begin{equation}\label{eq:rational_additive_formula}
    H^q(M(P,\lambda);\Q) \cong H^q(\RZ_P;\Q)^G \cong \bigoplus_{\omega\in \row(\lambda)} \widetilde H^{q-1}(P_\omega;\Q).
\end{equation}
This is the additive form of the rational cohomology formula.
The ring-level version appears in~\cite{Choi-Park2020}.
From \eqref{eq:rational_additive_formula}, we immediately have the orientability criterion for small covers.
\begin{corollary}[\cite{Nakayama-Nishimura2005}]\label{cor:NN}
Let $M(P,\lambda)$ be a small cover.
Then $M(P,\lambda)$ is orientable if and only if
$$
    \one_m\in \row(\lambda).
$$
\end{corollary}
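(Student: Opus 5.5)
The plan is to read orientability off the top-degree rational cohomology, using the additive formula \eqref{eq:rational_additive_formula}. A closed connected $n$-manifold $M$ is orientable if and only if $H^n(M;\Q)\cong\Q$; otherwise $H^n(M;\Q)=0$. The small cover $M(P,\lambda)$ is connected, since $P$ is connected and the facet reflections generate the group, so it suffices to show
$$
    H^n(M(P,\lambda);\Q)\neq 0 \iff \one_m\in\row(\lambda).
$$

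By \eqref{eq:rational_additive_formula}, $H^n(M(P,\lambda);\Q)\cong\bigoplus_{\omega\in\row(\lambda)}\widetilde H^{n-1}(P_\omega;\Q)$. We therefore compute $\widetilde H^{n-1}(P_\omega;\Q)$ for each $\omega\in\Z_2^m$, and the key claim is that it is $\Q$ when $\omega=\one_m$ and $0$ otherwise.

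\begin{itemize}
\item For $\omega=\one_m$ we have $P_\omega=\partial P\cong S^{n-1}$, so the group is $\Q$. When $n=1$ this uses the stated convention, which is consistent since $\partial P$ is then two points.
\item For $\omega=0$ we get $P_\omega=\varnothing$. Its cohomology is nonzero only in degree $-1$, so it contributes nothing in degree $n-1\ge 0$.
\item For $\omega\neq 0,\one_m$, the set $P_\omega$ is a proper nonempty union of facets of $\partial P\cong S^{n-1}$. It is a compact subcomplex of the boundary complex and misses the relative interior of some facet $F_j$ with $j\notin\supp(\omega)$. Hence it embeds in $S^{n-1}\setminus\{\text{pt}\}\cong\R^{n-1}$.
\end{itemize}

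For the third case, a compact polyhedron $K$ that is a proper subset of $S^{n-1}$ has $\widetilde H^{n-1}(K;\Q)=0$. One way to see this is Alexander duality, $\widetilde H^{n-1}(K)\cong\widetilde H_{-1}(S^{n-1}\setminus K)=0$, because the complement is nonempty. Alternatively, $K$ is an $(n-1)$-dimensional subcomplex of a triangulated sphere with a missing top cell, so its top cohomology vanishes. In the low-dimensional case $n=1$, the set $P_\omega$ is a single point, whose reduced $\widetilde H^0$ vanishes, and this agrees with the general argument.

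Combining the cases, $H^n(M(P,\lambda);\Q)$ is $\Q$ exactly when $\one_m\in\row(\lambda)$ and is $0$ otherwise, which proves the corollary. The only step that needs care is the vanishing of top reduced cohomology for proper unions of facets. Alexander duality handles it, so I do not expect a real obstacle beyond stating the connectedness and top-cohomology characterization of orientability.
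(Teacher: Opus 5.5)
Your proposal is correct and follows the derivation the paper intends. The paper states that the criterion follows ``immediately'' from the additive formula \eqref{eq:rational_additive_formula}, and you supply the missing top-degree computation: only $\omega=\one_m$ gives a nonzero $\widetilde H^{n-1}(P_\omega;\Q)$, since proper unions of facets have vanishing top cohomology by Alexander duality, and together with connectedness this yields orientability.
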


The integral cohomology group of a small cover can also be recovered from the data of $P_\omega$ by Cai--Choi~\cite{Cai-Choi2021}.
From the formula, we have the following corollary.
\begin{corollary}[{\cite[Corollary~1.5]{Cai-Choi2021}}] \label{thm:Cai-Choi-two-torsion}
Let $M=M(P,\lambda)$ be a small cover.
Assume that $H^\ast(P_\omega;\Z)$ is torsion-free for every $\omega\in \row(\lambda)$.
Then every torsion element of~$H^\ast(M;\Z)$ has order~$2$.
\end{corollary}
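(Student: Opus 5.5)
The plan is to separate the claim into two parts: first that $H^\ast(M;\Z)$ has no odd torsion, and then that its $2$-primary torsion has exponent~$2$. Throughout, put $M=M(P,\lambda)$ and $G=\ker\lambda$.

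\emph{No odd torsion.} I would work with coefficients $\Z[1/2]$.

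1. The cellular cochain complex of $\RZ_P$, with the cubical cell structure coming from $\Z_2^m\times P$, is a complex of $\Z[\Z_2^m]$-modules. Since $2$ is invertible in $\Z[1/2]$, it splits as a direct sum of character pieces indexed by $\omega\in\Z_2^m$.

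2. As in the proof of the Hochster decomposition \eqref{eq:rational_additive_formula_RZP}, the $\omega$-piece is the (augmented, shifted) cochain complex of $P_\omega$. This gives
$H^q(\RZ_P;\Z[1/2])\cong\bigoplus_{\omega}\widetilde H^{q-1}(P_\omega;\Z[1/2])\otimes\Z[1/2]_\omega$, i.e. \eqref{eq:rational_additive_formula_RZP} with $\Q$ replaced by $\Z[1/2]$.

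3. The group $G$ acts freely and $|G|$ is invertible, so the transfer gives $H^\ast(M;\Z[1/2])\cong H^\ast(\RZ_P;\Z[1/2])^G$.

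4. The characters trivial on $G$ are exactly those in $\row(\lambda)$, which yields the $\Z[1/2]$-analogue of \eqref{eq:rational_additive_formula}.

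5. By hypothesis each $\widetilde H^\ast(P_\omega;\Z)$ with $\omega\in\row(\lambda)$ is free, so each $\widetilde H^\ast(P_\omega;\Z[1/2])$ is free over $\Z[1/2]$. Hence $H^\ast(M;\Z[1/2])$ is free, and $H^\ast(M;\Z)$ has no odd torsion.

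\emph{Exponent two.} Torsion in $H^\ast(M;\Z)$ is all of order $2$ if and only if the Bockstein spectral sequence in mod~$2$ cohomology collapses at $E_2$. Equivalently, the homology of $(H^\ast(M;\Z_2),Sq^1)$ should have dimension $b_q(M)$ in each degree $q$. The Betti numbers $b_q(M)$ are known from \eqref{eq:rational_additive_formula}.

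1. On the Davis--Januszkiewicz presentation \eqref{eq:mod2cohom_smallcover}, $Sq^1$ is the derivation with $Sq^1 v_i=v_i^2$.

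2. Rather than computing this homology directly from the ring, I would use the cellular model. Take the integral cochain complex of $M$ obtained from the $\Z_2^m$-equivariant cubical complex of $\RZ_P$ by passing to $G$-coinvariants. Filter it by the $\Z_2^n=\Z_2^m/G$-character decomposition, as in Cai--Choi~\cite{Cai-Choi2021}.

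3. Over $\Z$ this decomposition does not split. I expect, however, a filtration whose associated graded pieces are the complexes $\widetilde C^{\ast-1}(P_\omega;\Z)$ for $\omega\in\row(\lambda)$, and whose connecting maps are divisible by $2$ exactly once. This is the mechanism producing $\Z_2$-summands.

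4. Under the torsion-freeness hypothesis, the pieces contribute free groups. Then only the once-divisible connecting maps can create torsion, and these create only $\Z_2$'s.

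\emph{Main obstacle.} The hard step is the last one: controlling the integral extension between the $\omega$-pieces to show that no $\Z_4$ or higher cyclic summand arises. My first attempt would be to make the Cai--Choi integral cochain model explicit for $M$. I would then check on generators that the composite of two connecting maps is divisible by $4$. Since $\Z_2^n$ has exponent $2$, that divisibility would force the second Bockstein differential to vanish. If this becomes messy, the fallback is to quote the Cai--Choi integral formula directly. In that formula each torsion summand of $H^\ast(M;\Z)$ is expressed through $H^\ast(P_\omega;\Z)$ and copies of $\Z_2$, so freeness of the former leaves only order-$2$ torsion.
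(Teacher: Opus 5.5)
Your odd-torsion half is fine. Over $\Z[1/2]$ the character idempotents exist, and the transfer gives $H^\ast(M;\Z[1/2])\cong H^\ast(\RZ_P;\Z[1/2])^G$. The hypothesis then makes the $\Z[1/2]$-version of \eqref{eq:rational_additive_formula} free.

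The gap is in the exponent-two half, which is where all the content lies. The filtration you expect cannot exist in the form you describe. Suppose $C^\ast(M;\Z)$ had a filtration whose associated graded pieces were the complexes $\widetilde C^{\ast-1}(P_\omega;\Z)$, $\omega\in\row(\lambda)$. Under the hypothesis, its $E_1$-page would be free of total rank $\sum_q b_q(M)$ by \eqref{eq:rational_additive_formula}. Every differential would then be a rationally zero map between free groups, hence zero. So $E_\infty=E_1$ would be free and $H^\ast(M;\Z)$ torsion-free. This already fails for $\mathbb{R}P^2$ over the triangle: every $P_\omega$ with $\omega\in\row(\lambda)$ is empty or an arc, so $E_1=\Z$ in degree $0$, yet $H^2(\mathbb{R}P^2;\Z)=\Z_2$. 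So the $\Z_2$'s are not created by connecting maps between such pieces. The factors of $2$ have to appear inside the graded pieces themselves. Already for $(D^1,S^0)$, the trivial and sign lines span only an index-$2$ sublattice of $C^0$, and $\delta$ is multiplication by $2$ on the sign line. Independently, \emph{each connecting map divisible by $2$ exactly once} does not bound the exponent: $\Z^2\to\Z^2$ given by $\bigl(\begin{smallmatrix}2&2\\2&-2\end{smallmatrix}\bigr)$ has cokernel $\Z_2\oplus\Z_4$.

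Your \emph{main obstacle} plan does not repair this. The exponent of $\Z_2^n$, or of $G$, does not control higher Bocksteins. Free quotients by elementary abelian $2$-groups can have higher $2$-torsion: the Hantzsche--Wendt manifold, a free $\Z_2^2$-quotient of $T^3$, has $H_1\cong\Z_4\oplus\Z_4$. Also, killing $d_2$ alone would not exclude $\Z_8$-summands.

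What you actually need is $\dim E_2^q=b_q(M)$ for all $q$. That requires a genuine computation of the $Sq^1$-homology of \eqref{eq:mod2cohom_smallcover}, or of $H^\ast(M;\Z)$ itself. For instance, an upper bound $\dim E_2^q\le\sum_{\omega\in\row(\lambda)}\dim\widetilde H^{q-1}(P_\omega;\Z_2)$ would suffice. Under the hypothesis, universal coefficients and \eqref{eq:rational_additive_formula} turn the right side into $b_q(M)$, and $\dim E_2^q\ge b_q(M)$ always holds.

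The paper proves nothing of this kind itself. It imports the statement from \cite{Cai-Choi2021} as a consequence of their integral cohomology formula. So your fallback is exactly the paper's route, but with it your odd-torsion argument becomes redundant. Without it, your proposal establishes only the absence of odd torsion.
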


We record a simple duality statement for row spaces for later use.
\begin{lemma}\label{lem:row-kernel-duality}
Let $\lambda\colon \Z_2^m\longrightarrow \Z_2^n$ be a linear map, and let $G=\ker\lambda$.
For $u\in \Z_2^m$, the following two conditions are equivalent.
\begin{enumerate}
\item $u\notin \row(\lambda)$.
\item There exists $\tau\in G$ such that $\langle u,\tau\rangle=1$, where $\langle-,-\rangle$ denotes the standard pairing on $\Z_2^m$.
\end{enumerate}
\end{lemma}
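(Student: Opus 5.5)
The plan is to identify $\row(\lambda)$ with the annihilator of $G=\ker\lambda$ under the standard pairing on $\Z_2^m$. The equivalence is then immediate: $u\notin\row(\lambda)$ means exactly that $u$ fails to pair to zero with some element of $G$.

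First, I would show $\row(\lambda)\subset G^\perp$, where $G^\perp=\{u\in\Z_2^m \mid \langle u,\tau\rangle=0 \text{ for all } \tau\in G\}$. Denote the rows of the matrix of $\lambda$ by $r_1,\ldots,r_n\in\Z_2^m$. For $\tau\in\Z_2^m$ the $j$th coordinate of $\lambda(\tau)$ is $\langle r_j,\tau\rangle$. Hence $\tau\in G$ if and only if $\langle r_j,\tau\rangle=0$ for all $j$. By linearity, every element of $\row(\lambda)$ then pairs trivially with every element of $G$.

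Second, I would upgrade the inclusion to an equality by a dimension count. The standard pairing on $\Z_2^m$ is nondegenerate, so $\dim G^\perp = m-\dim G$. By rank–nullity, $\dim G = m-\operatorname{rank}\lambda$. Since row rank equals column rank over any field, $\dim\row(\lambda)=\operatorname{rank}\lambda$. Thus $\dim\row(\lambda)=\dim G^\perp$, and the inclusion $\row(\lambda)\subset G^\perp$ is an equality.

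Finally, $u\notin \row(\lambda)=G^\perp$ holds if and only if there is some $\tau\in G$ with $\langle u,\tau\rangle\neq 0$, that is, $\langle u,\tau\rangle=1$ in $\Z_2$. This is condition (2). There is no real obstacle here. The only point needing care is the nondegeneracy of the standard pairing over $\Z_2$, which gives $\dim W^\perp=m-\dim W$ for every subspace $W$. Note that the argument never uses surjectivity of $\lambda$, so it applies to an arbitrary linear map, as the statement allows.
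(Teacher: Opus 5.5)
Your proposal is correct and follows the paper's approach: the paper's proof is the one-line observation that $\row(\lambda)=(\ker\lambda)^\perp$ under the standard pairing, from which the equivalence is immediate. You supply the omitted justification of that identity, namely the inclusion $\row(\lambda)\subset G^\perp$ together with the dimension count coming from nondegeneracy and rank--nullity.
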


\begin{proof}
The standard pairing gives
$$
    \row(\lambda)=(\ker\lambda)^\perp.
$$
The claim follows at once.
\end{proof}

\subsection{Small covers over products of polygons}

Denote by $P_m$ the polygon with $m$ sides.
For a single $m$-gon $P_m$, the manifold $\RZ_{P_m}$ is a closed orientable surface.
Its standard cell decomposition has
$$
    f_0=2^m, \quad f_1=m2^{m-1},\quad \text{ and } \quad f_2=m2^{m-2}.
$$
The Euler characteristic is $\chi(\RZ_{P_m})=2^{m-2}(4-m)$, and, hence, its genus is
$$
    \genus(m):=1+(m-4)2^{m-3}.
$$
We write $\RZ_{P_m}\cong \Sigma_{g(m)}$.
In particular, $\RZ_{P_3}$ is a sphere~$S^2$ and $\RZ_{P_4}$ is a torus~$T^2$.

We prepare one linear algebraic lemma for later computation.
\begin{lemma}\label{lem:orientation-character-polygon}
    For the canonical $\Z_2^m$-action on $\RZ_{P_m}$, an element $\tau\in\Z_2^m$ preserves the orientation of $\RZ_{P_m}$ if and only if 
    $$
        \langle \one_m,\tau\rangle=0.
    $$
    Equivalently, the orientation character of this action is $\one_m$.
\end{lemma}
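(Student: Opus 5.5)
Since $\Z_2^m$ is generated by $e_1,\ldots,e_m$ and orientation behaviour defines a homomorphism $\Z_2^m\to\{\pm1\}\cong\Z_2$, it is enough to show that each generator $e_i$ reverses the orientation of $\RZ_{P_m}$. Granting that, the orientation character sends $\tau=\sum_i\tau_ie_i$ to $\sum_i\tau_i=\langle\one_m,\tau\rangle$. So $\tau$ preserves orientation exactly when $\langle \one_m,\tau\rangle=0$, and the character is $\one_m$.

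To see that $e_i$ reverses orientation, we will work locally near a fixed point. Choose a point $p$ in the relative interior of the edge $F_i$. By the definition of $\sim_P$, the point $[0,p]$ is fixed by $e_i$. A neighbourhood of $[0,p]$ in $\RZ_{P_m}$ is obtained by gluing $[0,U]$ and $[e_i,U]$ along $U\cap F_i$, where $U$ is a small half-disc neighbourhood of $p$ in $P_m$. Identifying $U$ with $\{(x,y):y\ge 0\}$ near the origin, with $F_i=\{y=0\}$, this neighbourhood becomes a disc. In that chart $e_i$ acts by $(x,y)\mapsto(x,-y)$, which is a reflection. Since $\RZ_{P_m}$ is a connected orientable surface, a diffeomorphism reverses orientation if and only if its differential reverses orientation at a single point. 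At the fixed point $[0,p]$ the differential of $e_i$ is a reflection, so $e_i$ reverses orientation.

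One can cross-check this against~\eqref{eq:rational_additive_formula_RZP} with $q=2$. Only $\omega$ with $\widetilde H^1(P_\omega;\Q)\ne0$ contribute, and this forces $P_\omega=\partial P_m$, i.e.\ $\omega=\one_m$. Hence $H^2(\RZ_{P_m};\Q)\cong\Q_{\one_m}$, and the action on the fundamental class is the character $\one_m$, which is the same conclusion.

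The main point needing care is that the action is smooth, so that the orientation is determined by the local linear model. For this we can use the standard smooth structure, in which the action near the interior of a facet is locally the reflection described above. Alternatively, we can use the cohomological computation just given, which avoids smoothness entirely, since orientation preservation is detected by the action on $H^2$.
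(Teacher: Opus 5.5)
Your proof is correct, but it takes a different route from the paper at the one substantive step. Both proofs reduce to generators through the homomorphism $\Z_2^m\to\{\pm1\}$. They differ in how they show that a single $e_i$ reverses orientation.

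The paper argues globally in an ambient model. It tacitly realizes $\RZ_{P_m}$ as a submanifold of $\R^m$ cut out by sign-invariant equations (the usual intersection of quadrics). There $e_j$ is a coordinate reflection of determinant $-1$ on $\R^m$ that carries the normal frame of the defining equations to itself, so it reverses the induced orientation. This is a one-line computation, but it uses an embedding the paper never sets up, since $\RZ_P$ is defined there as $(\Z_2^m\times P)/\sim_P$.

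You work directly from that quotient definition. You take a fixed point over the relative interior of $F_i$ and its doubled half-disc neighbourhood, where $e_i$ is a reflection. You then use that on a connected oriented manifold, orientation behaviour is detected at a single point.

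The only care needed is the point you flag yourself. The doubled half-disc is a priori only a topological chart: in the quadric smooth structure the orbit map squares the normal coordinate, so the smooth chart uses a signed square root. Still, $e_i$ is a linear reflection in that chart. Alternatively, a local-degree argument works in the topological category, or Bochner linearization gives a reflection because the fixed set through $[0,p]$ is an arc.

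Your cohomological argument via \eqref{eq:rational_additive_formula_RZP} is also valid and avoids smoothness entirely. A proper union of edges of a polygon is a disjoint union of arcs, so $H^2(\RZ_{P_m};\Q)\cong\Q_{\one_m}$. This states the lemma in exactly the form used later in Section~\ref{sec-hodge-mod2}, namely triviality of the $G$-action on $H^2(\cC_i)$. The price is relying on the equivariant Hochster decomposition as a black box. All three arguments extend readily to $\RZ_P$ for an arbitrary simple polytope $P$.
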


\begin{proof}
    The generator corresponding to $F_j$ changes the sign of one ambient coordinate.
    This map has determinant $-1$ on the ambient space, while it preserves the ordered normal frame up to determinant $+1$.
    Therefore this generator reverses the orientation of $\RZ_{P_m}$. 
    A product of such generators preserves the orientation exactly when the number of chosen generators is even.
    This is precisely the condition $\langle \one_m,\tau\rangle=0$.
\end{proof}

Let $M$ be a small cover over $P_m$.
Since $M$ is obtained from $\RZ_{P_m}$ by a $\Z_2^{m-2}$ quotient, we have $\chi(\RZ_{P_m}) = 2^{m-2} \chi(M)$.
If $M$ is orientable, then the genus of $M$ is $\frac{m-2}{2}$.
In particular, there is no orientable small cover over $P_m$ for odd $m$.

Now, let us consider the case when $P=P_{m_1}\times\cdots\times P_{m_n}$.
Thus $P$ is a simple polytope of dimension $2n$.
For each $i=1,\ldots,n$, let $E_{i,1},\ldots,E_{i,m_i}$ be the facets of the polygon $P_{m_i}$, ordered cyclically.
We use the convention $E_{i,m_i+1}=E_{i,1}$.
The facets of $P$ are
\begin{equation}\label{eq:facets_index}
    F_{i,j} = P_{m_1}\times\cdots\times P_{m_{i-1}} \times E_{i,j} \times P_{m_{i+1}}\times\cdots\times P_{m_n}.
\end{equation}

Hence the number of facets of $P$ is $m=m_1+\cdots+m_n$.

The real moment-angle manifold over $P$ is 
\begin{equation}\label{eq:RZK_productpolygon}
    \RZ_P \cong \RZ_{P_{m_1}}\times\cdots\times \RZ_{P_{m_n}} = \Sigma_{\genus(m_1)} \times \cdots \times \Sigma_{\genus(m_n)}.
\end{equation}

Let $\lambda\colon \Z_2^m \to \Z_2^{2n}$ be a linear map.
After ordering the facets factor by factor, the matrix of $\lambda$ has the
block form
$$
    \lambda= (\lambda_{i,j}) = \begin{pmatrix} \Lambda_1 & \Lambda_2 & \cdots & \Lambda_n \end{pmatrix},
$$
where
$$
    \Lambda_i= \begin{pmatrix} \lambda_{i,1} & \lambda_{i,2} & \cdots & \lambda_{i,m_i}\end{pmatrix} \in \cM_{2n\times m_i}(\Z_2).
$$
The nonsingularity condition for $\lambda$ can be written only in terms of vertices of~$P$.
For a tuple $\bj=(j_1,\ldots,j_n)$ with $1\leq j_i\leq m_i$, the corresponding vertex is the intersection of the $2n$ facets $F_{1,j_1},F_{1,j_1+1},    \ldots, F_{n,j_n},F_{n,j_n+1}$.
Define the vertex matrix
$$
    B_{\mathbf j}(\lambda) =
        \begin{pmatrix}
            \lambda_{1,j_1} & \lambda_{1,j_1+1} & \lambda_{2,j_2} & \lambda_{2,j_2+1} & \cdots & \lambda_{n,j_n} & \lambda_{n,j_n+1}
        \end{pmatrix}.
$$
This is a $2n\times 2n$ matrix over $\Z_2$.
From the definition of the nonsingularity condition for a characteristic map, we have the following lemma.
\begin{lemma}\label{lem:nonsingularity-product-polygons}
    The map $\lambda$ is a mod $2$ characteristic map over
    $P=P_{m_1}\times\cdots\times P_{m_n}$ if and only if
    $$
        \det B_{\mathbf j}(\lambda)=1
    $$
    for every tuple $\bj =(j_1,\ldots,j_n)$ with $1\leq j_i\leq m_i$.
\end{lemma}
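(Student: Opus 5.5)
The plan is to reduce the nonsingularity condition for $\lambda$ to the vertices of $P$. Then we use that, over $\Z_2$, linear independence of $2n$ vectors in $\Z_2^{2n}$ is the same as the determinant of the matrix they form being $1$.

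First, I will describe the face structure of $P=P_{m_1}\times\cdots\times P_{m_n}$. A face of a product is a product of faces of the factors. So a family of facets $F_{i,j}$ has nonempty intersection exactly when, for each $i$, the chosen facets $E_{i,j}$ of $P_{m_i}$ have nonempty intersection in $P_{m_i}$. In a polygon, two distinct edges meet if and only if they are cyclically adjacent, and no three distinct edges meet. Hence any family of facets with nonempty intersection is contained in one of the vertex families
$$
\{F_{1,j_1},F_{1,j_1+1},\ldots,F_{n,j_n},F_{n,j_n+1}\}.
$$
Conversely, each such family intersects in the vertex $\bj$. Both directions follow by extending the chosen edges of each factor to an adjacent pair $E_{i,j_i},E_{i,j_i+1}$.

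Second, I will invoke the definition. If $\lambda$ is characteristic, then the $2n$ columns of each $B_{\bj}(\lambda)$ are linearly independent in $\Z_2^{2n}$. A square matrix over the field $\Z_2$ with independent columns is invertible, so $\det B_{\bj}(\lambda)\neq 0$, i.e.\ $\det B_{\bj}(\lambda)=1$.

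Conversely, suppose every $\det B_{\bj}(\lambda)=1$, and take facets $F_{i_1},\ldots,F_{i_k}$ with nonempty intersection. By the first step they lie in some vertex family. Their images are then a subset of the columns of an invertible $B_{\bj}(\lambda)$, and subsets of linearly independent sets are independent. Here I use that distinct facets give distinct columns of $B_{\bj}$, which holds because $m_i\ge 3$ ensures $E_{i,j_i}\neq E_{i,j_i+1}$.

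The only step needing care is the combinatorial description of the intersecting families of facets. It is elementary once the product face structure is written down, and the rest is immediate linear algebra.
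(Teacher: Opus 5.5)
Your proof is correct and follows the same route as the paper. The paper just remarks that nonsingularity only needs to be checked on the vertex families $\{F_{1,j_1},F_{1,j_1+1},\ldots,F_{n,j_n},F_{n,j_n+1}\}$, and that over $\Z_2$ this is equivalent to $\det B_{\bj}(\lambda)=1$. You have spelled out the face-structure argument the paper leaves implicit, namely that every intersecting family of facets extends to a vertex family, which relies on $m_i\ge 3$ so that the $2n$ facets of a vertex family are distinct.
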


\subsection{Symplectic and Hodge-theoretic terminology}

Let $N$ be a closed smooth manifold of real dimension $2n$.
A symplectic structure on $N$ is a closed nondegenerate $2$-form.
If $\omega$ is such a form, then
$$
    [\omega]^n\neq 0 \in H^{2n}(N;\R).
$$
Thus a symplectic manifold has a nonzero top power of a degree-two cohomology class.

We say that $N$ is \emph{cohomologically symplectic}, or \emph{c-symplectic}, if there exists $a\in H^2(N;\Q)$ such that
$$
    a^n\neq 0 \in H^{2n}(N;\Q).
$$
Since the condition $a^n \neq 0$ is open in $H^2(N;\R)$ and rational classes are dense, the existence of a symplectic form implies the existence of such a rational class.
Every symplectic manifold is c-symplectic.

A natural next step beyond symplectic geometry is to ask whether a given manifold admits an integrable complex structure, or even a K\"ahler or projective structure.
These stronger structures often impose additional topological and cohomological restrictions.
A compact complex manifold is called \emph{K\"ahler} if it admits a K\"ahler metric.
A compact complex manifold is called \emph{projective} if it is biholomorphic to a smooth complex projective variety.
It is standard that
$$
    \text{projective} \Longrightarrow \text{K\"ahler} \Longrightarrow \text{symplectic} \Longrightarrow \text{c-symplectic};
$$
see, for example, \cite{Huybrechts2005book}.

Throughout the paper, by a \emph{smooth projective model} of a smooth manifold~$M$, we mean a smooth projective variety~$X$ whose underlying smooth manifold is diffeomorphic to~$M$.

If $X$ is a smooth projective variety of complex dimension~$n$, then its complexified cotangent bundle splits as
$$
    T_X^\ast\otimes \C = T_X^{\ast\,1,0}\oplus T_X^{\ast\,0,1}.
$$
Accordingly, for each $r$ one has a decomposition of complex differential forms
$$
    \Omega_X^r\otimes \C=\bigoplus_{p+q=r}\Omega_X^{p,q},
$$
where $\Omega_X^{p,q}$ denotes the space of smooth differential forms of type $(p,q)$.
If $X$ is K\"ahler, and in particular if $X$ is smooth projective, then Hodge theory shows that this decomposition descends to cohomology and gives a natural decomposition
$$
    H^r(X;\C)=\bigoplus_{p+q=r} H^{p,q}(X),
$$
called the \emph{Hodge decomposition}; see, for example, \cite{Voisin2007book}.
Thus the cohomology of~$X$ is decomposed into pieces of type $(p,q)$, reflecting the complex structure of~$X$.

We write
$$
    h^{p,q}(X):=\dim_\C H^{p,q}(X)
$$
for the \emph{Hodge numbers} of $X$.
The numbers $h^{p,q}(X)$ are usually arranged in a diamond-shaped array, called the \emph{Hodge diamond} of $X$.
They refine the rational Betti numbers, since
$$
    b_i(X)=\sum_{p+q=i} h^{p,q}(X).
$$
In general, the Hodge diamond contains more information than the Betti numbers and need not be determined by them.

\section{The factor-compatible class and projective models}\label{sec-products}

From this section on, we study small covers of even dimension.
Accordingly, throughout this section we consider a simple polytope of the form
$$
    P=\prod_{i=1}^n P_{m_i},
$$
where $P_{m_i}$ is an $m_i$-gon.
Thus the corresponding small covers have real dimension~$2n$.
We write $m=\sum_{i=1}^n m_i$, and decompose the facet set of $P$ as
$$
    \fF(P)=\bigsqcup_{i=1}^n \{F_{i,1},\dots,F_{i,m_i}\},
$$
where the block $\{F_{i,1},\dots,F_{i,m_i}\}$ comes from the $i$th polygon factor as in \eqref{eq:facets_index}.

We identify each facet $F_{i,j}$ with the corresponding standard basis vector of $\Z_2^m$.
With this convention, for each $i=1, \ldots, n$, we set the factor weights
$$
    \chi_i = F_{i,1}+ \dots + F_{i,m_i} \in \Z_2^m.
$$
We also write
$$
    \one_m= \sum_{F \in \fF(P)} F = \sum_{i=1}^n \chi_i \in \Z_2^m.
$$

Let $M=M(P,\lambda)$ be a small cover over $P$, and put $G=\ker\lambda$.
By Corollary~\ref{cor:NN}, $M$ is orientable if and only if $\one_m \in \row(\lambda)$.

Let $i_1, \ldots, i_r$ be the indices of the square factors.
For $a=1,\ldots, r$, define two opposite-pair weights
$$
    \delta_a^+=F_{{i_a},1}+F_{{i_a},3} \quad \text{ and } \quad \delta_a^-=F_{{i_a},2}+F_{{i_a},4}.
$$
If the $2r$ opposite-pair weights $\delta_1^+,\delta_1^-, \ldots, \delta_r^+,\delta_r^-$ can be partitioned into $r$ pairs $\{\omega_1,\omega_1'\},\dots,\{\omega_r,\omega_r'\}$ such that
$$
    \omega_\ell+\omega_\ell'\in \row(\lambda) \qquad  \text{for all } \ell=1,\dots,r,
$$
then we permute the interval directions and regroup them in such a way that $\omega_\ell$ and $\omega_\ell'$ form the two interval directions of the $\ell$th new square factor.

\begin{definition}\label{def:factor-compatible}
Let $M=M(P,\lambda)$ be a small cover over $P=\prod_{i=1}^n P_{m_i}$. 
We call $M$ \emph{factor-compatible} if the following condition holds.

After a permutation of the interval directions in the cube part
$$
    \prod_{m_i=4} P_4 \cong (\Delta^1)^{2r}
$$
and after regrouping them into $r$ square factors, the total facet weight $\chi_i$ of every resulting polygon factor satisfies
$$
    \chi_i\in\row(\lambda).
$$
After this regrouping, we relabel the resulting polygon factors and denote their total facet weights again by $\chi_i$.
Whenever a factor-compatible small cover is considered, such a regrouping is fixed once and for all.
\end{definition}

This regrouping is induced by a combinatorial automorphism of the cube part, namely by a permutation of its interval directions. 
It changes only the product decomposition of the cube part into square factors, and not the underlying real moment-angle manifold or the resulting quotient smooth manifold.

By Lemma~\ref{lem:orientation-character-polygon}, the condition says that the quotient action preserves the orientations of the surface factors.
The following lemma shows that every polygon factor in a factor-compatible small cover must have an even number of sides.

\begin{lemma}\label{lem:odd-factor-not-compatible}
Let $P = P_{m_1} \times \cdots \times P_{m_n}$ be a product of polygons, and let $\lambda$ be a mod~$2$ characteristic map over $P$.
If $\chi_i \in \row(\lambda)$, then $m_i$ is even.
\end{lemma}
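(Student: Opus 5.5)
Assume $\chi_i\in\row(\lambda)$; we show $m_i$ is even by restricting to the $i$th polygon factor. Write $\chi_i=\sum_j c_j\,\row_j(\lambda)$ for some coefficient vector $c\in\Z_2^{2n}$. In matrix terms this means $c^{T}\lambda_{k,l}=1$ if $k=i$ and $c^{T}\lambda_{k,l}=0$ if $k\neq i$, for every column $\lambda_{k,l}$ of $\lambda$. In particular $c$ is a linear functional $\Z_2^{2n}\to\Z_2$ that takes the value $1$ on every column $\lambda_{i,1},\dots,\lambda_{i,m_i}$ of the block $\Lambda_i$ and vanishes on all columns of the other blocks.

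Next, fix a vertex index $\bj$ of $P$ with $j_i=j$. By Lemma~\ref{lem:nonsingularity-product-polygons}, $B_{\bj}(\lambda)$ is invertible. Its columns coming from blocks $k\neq i$ span a subspace $W$ of dimension $2n-2$, and $c$ vanishes on $W$. I would then form the quotient $\Z_2^{2n}/W\cong\Z_2^2$. The images $\bar\lambda_{i,j},\bar\lambda_{i,j+1}$ form a basis of this quotient, and $c$ descends to a functional $\bar c$ on it with $\bar c(\bar\lambda_{i,j})=\bar c(\bar\lambda_{i,j+1})=1$.

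The main obstacle is that $W$ depends on the choice of the other $j_k$. To handle this, I would keep the other indices fixed and vary only $j$ from $1$ to $m_i$. Then $W$ stays fixed, and for every $j$ the pair $\bar\lambda_{i,j},\bar\lambda_{i,j+1}$ is a basis of $\Z_2^2$. Consequently, consecutive images are distinct nonzero vectors of $\Z_2^2$. Moreover, $\bar c$ equals $1$ on all of them, so every image lies in $\bar c^{-1}(1)$. This set consists of exactly two of the three nonzero vectors of $\Z_2^2$, since $\bar c\neq0$.

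So the cyclic sequence $\bar\lambda_{i,1},\dots,\bar\lambda_{i,m_i}$ takes values in a two-element set, and consecutive entries (cyclically, with $E_{i,m_i+1}=E_{i,1}$) always differ. Hence the sequence alternates between the two values. A cyclic alternating sequence must have even length, so $m_i$ is even.
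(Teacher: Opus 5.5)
Your proposal is correct and follows essentially the same argument as the paper. Both proofs fix a vertex of the product of the other factors and pass to the two-dimensional quotient $\Z_2^{2n}/W$. There the images of the $i$th polygon's characteristic vectors lie in the two-element set where the functional equals $1$, and adjacent images must differ, so they alternate and $m_i$ is even.
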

\begin{proof}
Since $\chi_i \in \row(\lambda)$, there is a linear functional
    $$
        \varphi\colon \Z_2^{2n}\to \Z_2
    $$
whose values on the columns of $\lambda$ are prescribed by $\chi_i$.
    
Choose a vertex in the product of all factors except the $i$th one.
Equivalently, choose one adjacent pair of facets from each of those other polygon factors.
Let $W \subset \Z_2^{2n}$ be the span of the corresponding $2n-2$ characteristic vectors.
By the characteristic condition, these vectors are linearly independent, so $\dim W=2n-2$.
Moreover, $W\subset \ker\varphi$.
    
Now pass to the quotient $\Z_2^{2n}/W$.
The images of the characteristic vectors of the facets of the $i$th polygon all have $\varphi$-value $1$.
Hence they lie in the affine line
    $$
        \{v\in \Z_2^{2n}/W\mid \varphi(v)=1\},
    $$
which has two elements.
For two adjacent facets of the $i$th polygon, the characteristic condition implies that their images in this quotient are distinct.
Thus the two possible images must alternate as one goes around the polygon.
This is possible only when $m_i$ is even.
\end{proof}

\begin{remark}\label{rem:pure-cube-case}
In the pure cube case, the above definition recovers the usual symplectic condition for real Bott manifolds. 
Indeed, writing $(P_4)^n\cong(\Delta^1)^{2n}$, the condition says that the $2n$ interval directions can be paired so that the two weights in each pair define the same character on $G=\ker\lambda$. 
This is equivalent to the existence of a $G$-invariant translation-invariant symplectic form on the torus cover $\RZ_P\cong T^{2n}$. 
By Ishida's theorem, this agrees with the symplectic, equivalently K\"ahler, real Bott condition.
\end{remark}

We now prove that the factor-compatible condition gives projective models.

\begin{theorem}\label{thm:factor-compatible-projective}
Let $P=P_{m_1}\times\cdots\times P_{m_n}$ be a product of polygons, and let $M=M(P,\lambda)$ be a factor-compatible small cover.
Then $M$ admits a smooth projective model.
More precisely, there exists a smooth projective variety $X$ whose underlying smooth manifold is diffeomorphic to $M$.

In particular, the underlying smooth manifold of every factor-compatible small cover admits projective, K\"ahler, symplectic, and c-symplectic structures.
\end{theorem}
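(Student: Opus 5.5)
We will realize $M=\RZ_P/G$ as a quotient $(\cC_1\times\cdots\times\cC_n)/G$, where each $\cC_i$ is a compact Riemann surface. After the fixed regrouping of Definition~\ref{def:factor-compatible}, $\RZ_P$ is still diffeomorphic, $G$-equivariantly, to a product $\Sigma_1\times\cdots\times\Sigma_n$ of surfaces. For the non-square factors this is \eqref{eq:RZK_productpolygon}. For the regrouped square factors, the cube part is $\RZ_{(\Delta^1)^{2r}}=(S^1)^{2r}$, where each circle $S^1=\RZ_{\Delta^1}$ carries the reflection action of its two facet generators. The regrouping pairs the circles into $r$ tori $S^1\times S^1$.

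The key observation is the following. Since $G$ acts by a product action, $\RZ_P/G$ is a free quotient of a product, and it suffices to put a complex structure on each factor $\Sigma_i$ so that every $\tau\in G$ acts holomorphically, factor by factor. Then $G$ acts freely and holomorphically on the projective manifold $\cC_1\times\cdots\times\cC_n$. A free quotient of a projective manifold by a finite group is again projective: average an ample line bundle over $G$ (or use that $G$-orbits lie in affine opens) to see that the quotient is a smooth projective variety. It is diffeomorphic to $M$ by construction, and the last sentence of the theorem then follows from the standard chain of implications recalled in the preliminaries.

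\emph{Non-square factors.} By factor-compatibility, $\chi_i\in\row(\lambda)=G^\perp$, so $\langle\chi_i,\tau\rangle=0$ for all $\tau\in G$. By Lemma~\ref{lem:orientation-character-polygon}, every $\tau\in G$ then acts on $\RZ_{P_{m_i}}$ by an orientation-preserving diffeomorphism. The image $G_i$ of $G$ in $\Z_2^{m_i}$ is a finite group acting smoothly and orientation-preservingly on $\Sigma_{\genus(m_i)}$. Choose a $G_i$-invariant Riemannian metric by averaging. Its conformal class together with the orientation gives a complex structure for which $G_i$ acts by biholomorphisms, since conformal orientation-preserving maps of a surface are holomorphic. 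A compact Riemann surface is a projective curve $\cC_i$. The $G_i$-action need not be free on this factor; freeness holds only on the product, which is all we need.

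\emph{Square factors.} Write a regrouped square factor as $T=S^1\times S^1=\R^2/\Z^2$, with each circle $\R/\Z$ and the generators acting by reflections $x\mapsto -x$ and $x\mapsto 1-x$ (or $x\mapsto 1/2-x$ in suitable coordinates), that is, by $x\mapsto \pm x+c$. An element $\tau\in G$ acts on $T$ by $(x,y)\mapsto(\epsilon_1x+c_1,\epsilon_2y+c_2)$. Here $\epsilon_1=(-1)^{\langle\omega,\tau\rangle}$ and $\epsilon_2=(-1)^{\langle\omega',\tau\rangle}$, where $\omega,\omega'$ are the opposite-pair weights of the two circles. The condition $\omega+\omega'\in\row(\lambda)$ gives $\epsilon_1=\epsilon_2$ for all $\tau\in G$. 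Hence each $\tau$ acts by $z\mapsto\pm z+c$ for the complex coordinate $z=x+iy$ on $\C/(\Z+i\Z)$, which is holomorphic, and $T$ becomes a $G$-equivariant elliptic curve. This is the step needing care: one must verify that the reflection action of $\RZ_{\Delta^1}$ is conjugate to an affine action, that both circles can be identified simultaneously so that the linear parts are $\pm\mathrm{id}$ and not, say, swapped, and that the regrouping permutation is $G$-equivariant. Note that the regrouping only permutes circle factors, so the $G$-action on $(S^1)^{2r}$ is unchanged.

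Combining the factors, $G$ acts holomorphically and freely on $\prod\cC_i$, and we set $X=(\prod\cC_i)/G$. The main obstacle is bookkeeping rather than conceptual. One must check that the diffeomorphism \eqref{eq:RZK_productpolygon} is $\Z_2^m$-equivariant with the product action, so that holomorphicity factor by factor is legitimate. One must also handle the projectivity of the quotient carefully. For the latter, I would take $L=\bigotimes_{g\in G}g^\ast A$ for an ample $A$; this gives a $G$-linearized ample bundle which descends to the free quotient.
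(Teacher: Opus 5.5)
Your argument is correct and is essentially the paper's proof. Both put a $G$-invariant complex structure on each surface factor via an averaged metric plus the orientation, obtain a free holomorphic $G$-action on $\cC_1\times\cdots\times\cC_n$, and descend the $G$-linearized ample bundle $\bigotimes_{g\in G} g^\ast A$. The only deviation is your separate treatment of the square factors, which the paper handles uniformly: after regrouping, the total weight $\omega_\ell+\omega_\ell'$ of a new square is its $\chi_i\in\row(\lambda)$, so Lemma~\ref{lem:orientation-character-polygon} already makes $G$ orientation-preserving on that torus (your condition $\epsilon_1=\epsilon_2$ is exactly this), and the averaging argument applies verbatim. Your explicit flat model with $z\mapsto \pm z+c$ is also valid, and it gives the slightly sharper conclusion that these factors are elliptic curves with $G$ acting affinely, as in Remark~\ref{rem:pure-cube-case}.
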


\begin{proof}
After the fixed regrouping in Definition~\ref{def:factor-compatible}, we write the resulting product of polygons again as $P=P_{m_1}\times\cdots\times P_{m_n}$.
Then
$$
    \RZ_P\cong \Sigma_{\genus(m_1)}\times\cdots\times \Sigma_{\genus(m_n)}.
$$
The group $G=\ker\lambda$ acts freely on this product, and $M\cong \RZ_P/G$.
    
The condition $\chi_i\in \row(\lambda)$ means that every element of $G$ has even parity on the facets coming from the $i$th polygon factor.
By Lemma~\ref{lem:orientation-character-polygon}, this is equivalent to saying that every element of $G$ acts orientation-preservingly on the surface factor $\Sigma_{\genus(m_i)}\cong \RZ_{P_{m_i}}$.
    
Now choose any Riemannian metric on $\Sigma_{\genus(m_i)}$.
Since the image of $G$ in the diffeomorphism group of this factor is finite, we may average the metric over this finite group.
The resulting metric is $G$-invariant.
Together with the fixed orientation of $\Sigma_{\genus(m_i)}$, this metric determines a complex structure on the surface.
Because the elements of $G$ preserve both the metric and the orientation, they are holomorphic with respect to this complex structure.
Thus $\Sigma_{\genus(m_i)}$ becomes a compact Riemann surface.
Equivalently, it is a smooth projective curve, which we denote by $\cC_i$, whose genus is $\genus(m_i)$.

Therefore, we obtain a smooth projective variety $Y=\cC_1\times\cdots\times \cC_n$ with a holomorphic action of $G$.
Under the product decomposition $\RZ_P\cong \RZ_{P_{m_1}}\times\cdots\times \RZ_{P_{m_n}}$, the canonical $\Z_2^m$-action is factorwise. 
The complex structures chosen above change only the geometric structures on the same smooth factors, not the underlying smooth $G$-action. 
Hence the resulting holomorphic $G$-action on $Y$ is free because the original smooth $G$-action on $\RZ_P$ is free.
Therefore
$$
    X:=Y/G
$$
is a smooth compact complex manifold.
Since $Y$ is projective and $G$ acts freely and holomorphically on $Y$, the quotient $X = Y/G$ is a smooth projective variety whose underlying smooth manifold is diffeomorphic to $M$.
Indeed, if $L$ is an ample line bundle on $Y$, then the tensor product of its $G$-translates admits a natural $G$-linearization and descends to an ample line bundle on~$X$.
\end{proof}

\section{Some obstructions to symplecticity}\label{sec-symplectic-obstructions}

Theorem~\ref{thm:factor-compatible-projective} shows that every factor-compatible small cover over a product of polygons is symplectic.
The converse is known for products of two polygons by~\cite{Choi2026}.
In dimensions at least six, the converse is not known in general.
In this section we record two elementary obstructions.

\begin{proposition}\label{prop:triangle-factor-not-csymp}
Let $P=P_{m_1}\times\cdots\times P_{m_n}$ be a product of polygons.
If one of the factors is a triangle, then no small cover over $P$ is c-symplectic.
In particular, no small cover over $P$ is symplectic.
\end{proposition}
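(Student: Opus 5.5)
The plan is to compute $H^2(M;\Q)$ through the $G$-invariant part of the rational cohomology of $\RZ_P$, where $G=\ker\lambda$. I will then show that the triangle factor contributes no invariant degree-two class, so every $a\in H^2(M;\Q)$ comes from the remaining $2n-2$ real dimensions and $a^n=0$.

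\emph{Setting up.} Let $M=M(P,\lambda)$ and $G=\ker\lambda$, and suppose the $i$th factor is a triangle, so $\RZ_{P_{m_i}}=\RZ_{P_3}\cong S^2$. By \eqref{eq:rational_additive_formula} we have $H^\ast(M;\Q)\cong H^\ast(\RZ_P;\Q)^G$. Since the covering $\RZ_P\to M$ is finite, the transfer identifies the cup product on the left with the restricted cup product on the right. By \eqref{eq:RZK_productpolygon} we write $\RZ_P\cong S^2\times N$, where $N$ is the product of the other $n-1$ surfaces, a closed manifold of dimension $2n-2$. The canonical $\Z_2^m$-action, and hence the $G$-action, is factorwise. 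So the K\"unneth decomposition
$$
H^2(S^2\times N;\Q)=\bigl(H^2(S^2;\Q)\otimes H^0(N;\Q)\bigr)\oplus\bigl(H^0(S^2;\Q)\otimes H^2(N;\Q)\bigr)
$$
is $G$-equivariant. There is no $H^1\otimes H^1$ summand because $H^1(S^2;\Q)=0$. Consequently the two components of a $G$-invariant class are separately $G$-invariant.

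\emph{Key step: the $S^2$-summand has no invariants.} An element $\tau\in G$ acts on $H^2(S^2;\Q)\otimes H^0(N;\Q)\cong\Q$ by the orientation sign of its action on the triangle factor. By Lemma~\ref{lem:orientation-character-polygon}, this sign is $(-1)^{\langle\chi_i,\tau\rangle}$. If this summand had a nonzero invariant vector, every $\tau\in G$ would satisfy $\langle\chi_i,\tau\rangle=0$, and Lemma~\ref{lem:row-kernel-duality} would give $\chi_i\in\row(\lambda)$. Lemma~\ref{lem:odd-factor-not-compatible} would then force $m_i=3$ to be even, which is absurd. Hence the invariant part of this summand is zero. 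This is the step where the triangle hypothesis is actually used, and I expect it to be the main point to check carefully, namely the identification of the $G$-action on $H^2(S^2)$ with the orientation character.

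\emph{Conclusion.} Every $a\in H^2(M;\Q)$ therefore corresponds to a class of the form $1\otimes b$ with $b\in H^2(N;\Q)$. Then
$$
a^n \longleftrightarrow 1\otimes b^n, \qquad b^n\in H^{2n}(N;\Q)=0,
$$
since $\dim N=2n-2$. So $a^n=0$ for every $a\in H^2(M;\Q)$, and $M$ is not c-symplectic. Since every symplectic manifold is c-symplectic, no small cover over $P$ is symplectic.
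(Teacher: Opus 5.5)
Your proposal is correct and follows essentially the same argument as the paper. The paper also pulls $a$ back to $\RZ_P\cong S^2\times Y'$. It then uses Lemmas~\ref{lem:odd-factor-not-compatible}, \ref{lem:row-kernel-duality}, and \ref{lem:orientation-character-polygon} to produce $\tau\in G$ acting by $-1$ on $H^2(S^2;\Q)$, and concludes from the K\"unneth decomposition that the invariant class lies in $H^2(Y';\Q)$, so its $n$th power vanishes. The only differences are cosmetic: you phrase the key step contrapositively and use the ring isomorphism $H^\ast(M;\Q)\cong H^\ast(\RZ_P;\Q)^G$, whereas the paper needs only the transfer-injectivity of $\pi^\ast$.
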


\begin{proof}
Suppose that the $i$th factor is a triangle, so $m_i=3$.
Let $M = \RZ_P/G$ be a small cover over $P$ with $G=\ker \lambda$.
By \eqref{eq:RZK_productpolygon}, we have
$$
    \RZ_P \cong \Sigma_{\genus(m_1)}\times\cdots\times\Sigma_{\genus(m_n)}.
$$
The $i$th factor is $\Sigma_{\genus(3)}=S^2$.

By Lemma~\ref{lem:odd-factor-not-compatible}, we have $\chi_i\notin \row(\lambda)$.
Hence Lemma~\ref{lem:row-kernel-duality} gives an element $\tau\in G$ such that $\langle \chi_i,\tau\rangle=1$.
By Lemma~\ref{lem:orientation-character-polygon}, the element $\tau$ reverses the orientation of the $S^2$~factor.

Assume, for a contradiction, that $M$ is c-symplectic.
Then there exists $a\in H^2(M;\Q)$ such that $a^n\neq 0$.
Let $\pi\colon \RZ_P\to M$ be the finite covering map, and put $\widetilde a=\pi^\ast a$.
The transfer map shows that $\pi^\ast$ is injective over $\Q$.
Thus $\widetilde a^n\neq 0$.
Moreover, $\widetilde a$ is $G$-invariant.

Write $\RZ_P=S^2\times Y'$, where $Y'=\prod_{j\neq i}\Sigma_{\genus(m_j)}$.
Let $u\in H^2(S^2;\Q)$ be the orientation class.
Since $H^1(S^2;\Q)=0$, the K\"unneth decomposition gives
$$
    H^2(S^2\times Y';\Q) = \Q u\oplus H^2(Y';\Q).
$$
The element $\tau$ acts by $-1$ on $\Q u$.
Hence a $G$-invariant degree two class has zero $u$-component.
Therefore $\widetilde a\in H^2(Y';\Q)$.
But $Y'$ has real dimension $2n-2$.
Thus $\widetilde a^n=0$, a contradiction.
Hence $M$ is not c-symplectic.
\end{proof}

\begin{proposition}\label{prop:all-odd-polygons-not-orientable}
Let
$$
    P=P_{m_1}\times\cdots\times P_{m_n}
$$
be a product of polygons.
If all $m_i$ are odd, then no small cover over $P$ is orientable.
Consequently, no small cover over $P$ is symplectic.
\end{proposition}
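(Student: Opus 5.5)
We argue by contradiction using the orientability criterion of Corollary~\ref{cor:NN}. Suppose some small cover $M=M(P,\lambda)$ over $P$ with all $m_i$ odd is orientable. Then $\one_m=\sum_{i=1}^n\chi_i\in\row(\lambda)$, so there is a linear functional $\varphi\colon\Z_2^{2n}\to\Z_2$ taking the value $1$ on every characteristic vector $\lambda(F_{i,j})$.

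The key step is to reproduce the parity argument from the proof of Lemma~\ref{lem:odd-factor-not-compatible}, this time with $\one_m$ in place of $\chi_i$. The difficulty is that the subspace $W$ is no longer automatically contained in $\ker\varphi$, because $\varphi$ is $1$ on all columns, so the quotient step must be adjusted. Fix the first factor and choose a vertex of $P_{m_2}\times\cdots\times P_{m_n}$, that is, an adjacent pair of facets from each other factor. Let $W$ be the span of the corresponding $2n-2$ characteristic vectors; it has dimension $2n-2$ by nonsingularity.

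Consider the quotient $V=\Z_2^{2n}/W$, which is $2$-dimensional, and let $\bar v_j$ denote the image of $\lambda(F_{1,j})$. By nonsingularity at the vertices involving $E_{1,j}\cap E_{1,j+1}$, each consecutive pair $\bar v_j,\bar v_{j+1}$ is a basis of $V$. Hence every $\bar v_j$ is nonzero, and consecutive images are distinct.

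Now use $\varphi$. Every element of $W$ is a sum of $2n-2$ vectors on which $\varphi=1$. An even number of such vectors is summed over each adjacent pair, and a general element of $W$ is a sum of a subset of the basis, so $\varphi$ need not vanish on $W$. To fix this, replace $\varphi$ by a functional that does vanish on $W$. Since the basis vectors of $W$ together with two further vectors $\lambda(F_{1,j}),\lambda(F_{1,j+1})$ form a basis of $\Z_2^{2n}$, define $\psi$ to be $1$ on $\lambda(F_{1,j})$ and $\lambda(F_{1,j+1})$ and $0$ on $W$.

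This alone does not obviously work for all $j$, so it is simpler to argue directly in $V\cong\Z_2^2$. We have $\varphi(w)=|S|\bmod 2$ for $w$ the sum of a subset $S$ of the chosen basis of $W$, so $\varphi$ descends to $V$ only if it vanishes on $W$, and in general it does not.

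The cleaner route is therefore a different choice of $\varphi$. Modify it by $\chi_2+\cdots+\chi_n$, which is not known to lie in $\row(\lambda)$, so that modification fails too. Instead, choose $W$ differently: take $W$ spanned by pairwise sums $\lambda(F_{k,j_k})+\lambda(F_{k,j_k+1})$ together with $\lambda(F_{k,j_k})$ for $k\ge2$. This spans the same space, so it does not help either.

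The approach that does work is to count in $V$. The three nonzero vectors of $V$ form a cycle structure in which consecutive $\bar v_j$ are distinct nonzero vectors. We then need an invariant forcing evenness. Note that $\varphi(\lambda(F_{1,j}))=1$ for all $j$, while $\varphi$ restricted to the coset $\lambda(F_{1,j})+W$ takes both values. So instead lift $\varphi$ to $\tilde\varphi=\varphi+\theta$, where $\theta$ is $0$ on $\lambda(F_{1,1}),\lambda(F_{1,2})$ and equals $\varphi$ on the basis of $W$. Then $\tilde\varphi$ descends to a functional $\bar\varphi$ on $V$, and $\bar\varphi(\bar v_j)=1+\theta(\lambda(F_{1,j}))$. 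The difficulty is controlling $\theta$ on the $\lambda(F_{1,j})$ for $j\ge3$; this is where the argument needs more than the stated structure.

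I expect this to be the main obstacle. My first attempt would be a global mod~$2$ counting argument instead: sum $\det B_{\bj}(\lambda)=1$ over all vertices, or compute the Euler characteristic $\chi(M)=\prod_i(4-m_i)/2^n\cdot 2^{\,m-2n}\cdot 2^{-(m-2n)}$, using $\chi(\RZ_P)=\prod_i 2^{m_i-2}(4-m_i)$ and $|G|=2^{m-2n}$. This gives $\chi(M)=\prod_i (4-m_i)/2$, which is not an integer when every $m_i$ is odd. That is already a contradiction for any small cover, so the valid argument must instead rest on an orientation reduction. I would aim to show that an orientable $M$ yields an orientable small cover over a single $P_{m_i}$, using the fact, recorded above, that no orientable small cover over $P_m$ exists for odd $m$.

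Finally, the "consequently" part follows because a symplectic manifold is orientable.
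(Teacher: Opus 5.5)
Your proposal does not contain a proof. Each route you try stops short, and one rests on an arithmetic error.

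\textbf{The quotient argument.} The argument in $V=\Z_2^{2n}/W$ cannot work on its own. As you observe, $\varphi$ does not vanish on $W$. Without it, the images $\bar v_j$ carry no parity constraint: the three nonzero vectors of $V\cong\Z_2^2$ already form an odd cycle with every consecutive pair a basis.

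\textbf{The Euler characteristic.} Your computation is wrong. In fact $\chi(M)=\chi(\RZ_P)/|G|=2^{m-2n}\prod_i(4-m_i)/2^{m-2n}=\prod_i(4-m_i)$. This is an odd integer, not a non-integer; small covers over products of odd polygons certainly exist, e.g.\ $\R P^2\times\R P^2$. An odd Euler characteristic does rule out orientability when $n$ is odd, because a closed orientable manifold of dimension $\equiv 2 \pmod 4$ has even middle Betti number. It gives nothing when $n$ is even, since $\C P^2$ has $\chi=3$.

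\textbf{The reduction to one factor.} The proposed reduction to an orientable small cover over a single $P_{m_i}$ is only announced. Its natural mechanism fails for exactly the reason you found: the orientation functional does not descend to the face $P_{m_1}\times\{v\}$. In general, facial submanifolds of orientable small covers need not be orientable (e.g.\ $\R P^2\subset\R P^3$).

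\textbf{The missing idea.} It is the global count you mention in passing and then drop; the point is to make the vertex count multilinear. Put $T_i=\sum_{j}\lambda_{i,j}\wedge\lambda_{i,j+1}\in\Lambda^2(\Z_2^{2n})$, with indices cyclic.
\begin{itemize}
\item Expanding, the determinant form evaluated on $T_1\wedge\cdots\wedge T_n$ equals $\sum_{\bj}\det B_{\bj}(\lambda)=m_1\cdots m_n=1$, by nonsingularity and the oddness of all $m_i$.
\item If $\varphi(\lambda_{i,j})=1$ for all $i,j$, fix $x$ with $\varphi(x)=1$ and write $\lambda_{i,j}=x+e_{i,j}$ with $e_{i,j}\in\ker\varphi$. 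Then $x\wedge x=0$, and the cross terms $x\wedge e_{i,j}+x\wedge e_{i,j+1}$ cancel in pairs around each cycle mod $2$.
\item Hence $T_i=\sum_j e_{i,j}\wedge e_{i,j+1}\in\Lambda^2(\ker\varphi)$. It follows that $T_1\wedge\cdots\wedge T_n$ lies in $\Lambda^{2n}(\ker\varphi)=0$, because $\dim\ker\varphi=2n-1$.
\end{itemize}
This contradiction is the paper's proof. Your final step, deducing non-symplecticity from non-orientability, is fine.
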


\begin{proof}
    Let $M=M(P,\lambda)$ be a small cover over $P$.
    Write $\lambda_{i,j}=\lambda(F_{i,j})\in \Z_2^{2n}$, where the index $j$ is taken cyclically modulo $m_i$.
    
    For each tuple $\bj=(j_1,\ldots,j_n)$ with $1\leq j_i\leq m_i$, put
    $$
        B_{\bj}(\lambda) = \begin{pmatrix} \lambda_{1,j_1} & \lambda_{1,j_1+1} & \cdots & \lambda_{n,j_n} & \lambda_{n,j_n+1} \end{pmatrix}.
    $$
    By Lemma~\ref{lem:nonsingularity-product-polygons}, we have $\det B_{\bj}(\lambda)=1 \in \Z_2$ for every $\bj$.
    Since all $m_i$ are odd, it follows that
    \begin{equation}\label{eq:odd-det-sum}
        \sum_{\bj}\det B_{\bj}(\lambda) = m_1\cdots m_n = 1 \in \Z_2.
    \end{equation}
    
    Suppose, for a contradiction, that $M$ is orientable.
    By Corollary~\ref{cor:NN}, there exists a linear functional $\varphi\colon \Z_2^{2n}\to \Z_2$ such that $\varphi(\lambda_{i,j})=1$ for every $i$ and $j$.
    Choose $x\in \Z_2^{2n}$ with $\varphi(x)=1$.
    Then each characteristic vector can be written as $\lambda_{i,j}=x+e_{i,j}$ with $e_{i,j}\in \ker\varphi$.
    For each $i$, define 
    $$
        T_i=\sum_{j=1}^{m_i} \lambda_{i,j}\wedge \lambda_{i,j+1} \in \Lambda^2 (\Z_2^{2n}).
    $$
    Here the terms involving $x$ cancel because the indices are cyclic.
    Since $x \wedge x = 0$ and $e_{i,j} \wedge x = x \wedge e_{i,j}$ over $\Z_2$, we get
    \begin{align*}
        T_i &=\sum_{j=1}^{m_i}\bigl(x\wedge e_{i,j+1} + x\wedge e_{i,j} + e_{i,j}\wedge e_{i,j+1}\bigr)  \\
            &=\sum_{j=1}^{m_i}e_{i,j}\wedge e_{i,j+1} \in \Lambda^2(\ker\varphi).
    \end{align*}
    Therefore $T_1\wedge\cdots\wedge T_n \in \Lambda^{2n}(\ker\varphi)$.
    Since $\dim \ker \varphi=2n-1$, we have 
    \begin{equation}\label{eq:wedge-zero}
        T_1\wedge\cdots\wedge T_n=0.
    \end{equation}
    Let $\Omega\in \Lambda^{2n}(\Z_2^{2n})^\ast$ be the determinant form with respect to the standard basis of~$\Z_2^{2n}$.
    By multilinearity,
    \begin{align*}
        \Omega(T_1\wedge\cdots\wedge T_n) &= \sum_{\bj} \Omega(\lambda_{1,j_1}\wedge \lambda_{1,j_1+1} \wedge\cdots\wedge \lambda_{n,j_n}\wedge \lambda_{n,j_n+1}    ) \\
            &= \sum_{\bj}\det B_{\bj}(\lambda).
    \end{align*}
    By \eqref{eq:odd-det-sum}, the right-hand side is $1$.
    On the other hand, \eqref{eq:wedge-zero} implies that the left-hand side is $0$.
    This is a contradiction.
    
    Therefore $M$ is not orientable.
\end{proof}

\section{Hodge-theoretic rigidity of the projective models}\label{sec-hodge-mod2}

In Section~\ref{sec-products}, we constructed a smooth projective model for every factor-compatible small cover over a product of polygons.
This allows us to study such small covers not only as smooth manifolds, but also as projective varieties.
The main point of this section is that the factor-compatible projective models form a rigid class from this point of view.
In the sequel, whenever a factor-compatible small cover is considered, we replace the cube part by the fixed regrouped product decomposition from Definition~\ref{def:factor-compatible}.

Let $M=M(P,\lambda)$ be a factor-compatible small cover over $P=P_{m_1}\times\cdots\times P_{m_n}$, and let $X=Y/G$ and $Y=\cC_1\times\cdots\times \cC_n$ be the smooth projective model from Theorem~\ref{thm:factor-compatible-projective}.
Here $G=\ker\lambda$ is a finite elementary abelian $2$-group.
For each factor, set
$$
    V_i:=H^{1,0}(\cC_i).
$$
The corresponding curve has $\dim_\C V_i=\genus(m_i)$.

Since $G$ is elementary abelian, every complex character of $G$ is real-valued.
Therefore $V_i\cong \overline{V_i}$ as $G$-modules.

For each subset $S\subset [n]$ and each $0\le s\le n$, set
$$
    t_S:=\dim_\C\left(\bigotimes_{i\in S} V_i\right)^G \quad \text{ and } \quad T_s:=\sum_{|S|=s} t_S.
$$
We write
    $$
        P_X(t)=\sum_k b_k(X)t^k
    $$
for the rational Poincar\'e polynomial of $X$.

\begin{theorem}\label{thm:hodge-polynomial-factor-compatible}
The Hodge polynomial of $X$ is
    $$
    H_X(u,v)
    =
    \sum_{p,q\ge 0} h^{p,q}(X)u^p v^q
    =
    \sum_{S\subset [n]} t_S\,(u+v)^{|S|}(1+uv)^{n-|S|}.
    $$
Equivalently,
    $$
    H_X(u,v)
    =
    \sum_{s=0}^n T_s\,(u+v)^s(1+uv)^{n-s}.
    $$
The Poincar\'e polynomial is
    $$
    P_X(t)
    =
    \sum_{s=0}^n T_s\,(2t)^s(1+t^2)^{n-s}.
    $$
\end{theorem}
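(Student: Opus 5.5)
The plan is to compute $H^{p,q}(X)$ as the $G$-invariant part of $H^{p,q}(Y)$. Since $G$ acts freely and holomorphically on $Y=\cC_1\times\cdots\times\cC_n$ with quotient $X$, pullback along $Y\to X$ identifies $H^\ast(X;\C)$ with $H^\ast(Y;\C)^G$. This is the transfer argument already used in Proposition~\ref{prop:triangle-factor-not-csymp}, or equivalently \eqref{eq:rational_additive_formula}. Pullback by a holomorphic map preserves the Hodge type, and $G$ acts on $H^\ast(Y;\C)$ by automorphisms of Hodge structures. Hence $H^{p,q}(X)\cong H^{p,q}(Y)^G$ for all $p,q$.

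Next I will decompose $H^{p,q}(Y)$ by the K\"unneth formula. For each curve we have
$$
H^\ast(\cC_i;\C)=H^{0,0}\oplus V_i\oplus \overline{V_i}\oplus H^{1,1}.
$$
The group $G$ acts trivially on $H^{0,0}(\cC_i)$. It also acts trivially on $H^{1,1}(\cC_i)\cong H^2(\cC_i;\C)$, because every element of $G$ preserves the orientation of $\cC_i$; this is exactly the factor-compatibility condition $\chi_i\in\row(\lambda)$ combined with Lemma~\ref{lem:orientation-character-polygon}. Since the $G$-action on $Y$ is factorwise, $H^\ast(Y;\C)$ is, as a $G$-module, the direct sum over choices of one summand from each factor. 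For each factor we choose one of four summands: $\C$ of type $(0,0)$, $V_i$ of type $(1,0)$, $\overline{V_i}$ of type $(0,1)$, or $\C$ of type $(1,1)$.

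Group these choices by the set $S\subset[n]$ of factors where a $V_i$ or $\overline{V_i}$ is chosen. For a fixed $S$ and a fixed partition $S=S'\sqcup S''$, the corresponding $G$-module is $\bigotimes_{i\in S'}V_i\otimes\bigotimes_{i\in S''}\overline{V_i}$, tensored with trivial modules. Since all characters of $G$ are real, $\overline{V_i}\cong V_i$ as $G$-modules. So this summand is isomorphic to $\bigotimes_{i\in S}V_i$, and its invariant part has dimension $t_S$.

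Now I track the bidegree. Each factor in $S'$ contributes $u$ and each factor in $S''$ contributes $v$. Each factor outside $S$ contributes either $1$ or $uv$. Summing over all these choices gives $t_S(u+v)^{|S|}(1+uv)^{n-|S|}$. Summing over $S$ yields the first formula for $H_X(u,v)$, and grouping by $|S|=s$ gives the second. Setting $u=v=t$ and using $b_k=\sum_{p+q=k}h^{p,q}$ gives the formula for $P_X(t)$.

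The main point requiring care is the compatibility of the identification $H^{p,q}(X)\cong H^{p,q}(Y)^G$ with the K\"unneth decomposition and with the $G$-action. This follows because both the K\"unneth isomorphism and pullbacks are morphisms of Hodge structures, and $G$ acts as a product of actions on the factors. The one genuinely essential input is the triviality of $G$ on $H^2(\cC_i)$; without factor-compatibility, orientation-reversing elements would break the $(1+uv)$ factor. I will also note that the isomorphism $\overline{V_i}\cong V_i$ holds because complex conjugation intertwines the action, and the characters are real-valued.
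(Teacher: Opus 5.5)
Your proposal is correct and follows the paper's proof essentially step for step. You identify $H^{p,q}(X)$ with $H^{p,q}(Y)^G$ via the finite \'etale quotient, and you split $H^\ast(Y;\C)$ by K\"unneth into the four summands per curve. You then use that $G$ acts trivially on the $(0,0)$ and $(1,1)$ pieces and that $\overline{V_i}\cong V_i$ for real characters, which gives invariant dimension $t_S$ for each type assignment on $S$, and you sum the bidegrees. Your explicit $S=S'\sqcup S''$ bookkeeping simply spells out what the paper states more tersely.
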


\begin{proof}
    Let $\pi:Y\to X=Y/G$ be the quotient map.
    Since the action of $G$ on $Y$ is free and holomorphic, $\pi$ is a finite \etale cover.
    Hence, by the transfer argument and the compatibility of the Hodge decomposition with the $G$-action,
    $$
        H^{p,q}(X)\cong H^{p,q}(Y)^G.
    $$
    For each curve factor,
    $$
        H^\ast(\cC_i;\C) = \C\oplus V_i\oplus \overline{V_i}\oplus \C(-1),
    $$
    where the summands have bidegrees $(0,0)$, $(1,0)$, $(0,1)$, and $(1,1)$, respectively.
    The one-dimensional summands $\C$ and $\C(-1)$ are trivial $G$-representations.
    Indeed, $H^0(\cC_i;\C)$ is always trivial, and the $G$-action is orientation-preserving on the complex curve~$\cC_i$, so it acts trivially on $H^2(\cC_i;\C)\cong H^{1,1}(\cC_i)$.
    Thus
    $$
        H^\ast(Y;\C) \cong \bigotimes_{i=1}^n \left( \C\oplus V_i\oplus \overline{V_i}\oplus \C(-1) \right).
    $$
    
    Since $V_i$ and $\overline{V_i}$ are isomorphic as $G$-modules, the dimension of the invariant part is $t_S$ for each choice of whether the degree-one class from a factor in $S\subset [n]$ has type $(1,0)$ or $(0,1)$.
    The factors outside $S$ contribute either degree zero or degree two classes, giving the factor
    $$
        (1+uv)^{n-|S|}.
    $$
    The factors in $S$ contribute the factor
    $$
        (u+v)^{|S|}.
    $$
    This gives
    $$
        H_X(u,v) = \sum_{S\subset [n]} t_S (u+v)^{|S|}(1+uv)^{n-|S|}.
    $$
    Grouping the summands by $|S|$ gives the formula involving $T_s$.
    Finally, setting $u=v=t$ gives the Poincar\'e polynomial.
\end{proof}

For $u\in \Z_2^m$ and $\tau \in G = \ker\lambda$, put
$$
    \varepsilon_u (\tau) = \langle u, \tau \rangle.
$$
Then $\varepsilon_u$ is an element of $\widehat G:=\Hom(G,\Z_2)$.
By Lemma~\ref{lem:row-kernel-duality} applied to $u+u'$, two vectors $u,u'\in \Z_2^m$ define the same character on $G$ if and only if $u+u'\in \row(\lambda)$.

For each polygon factor, put $I_i=\{F_{i,1},\dots,F_{i,m_i}\}$ and regard a subset of $I_i$ as the corresponding vector in $\Z_2^m$.
Set
$$
    L_i:=\row(\lambda)\cap \Z_2^{I_i}.
$$
Then, for $u,u'\subset I_i$, one has
$$
    \varepsilon_u=\varepsilon_{u'} \iff u+u'\in L_i.
$$
In particular, $\varepsilon_u=0$ if and only if $u\in L_i$.

For $\rho\in\widehat G$, let $\C_\rho$ denote the one-dimensional complex representation on which $\tau \in G$ acts by multiplication by~$(-1)^{\rho(\tau)}$.
Write
\begin{equation}\label{eq:V_i}
    V_i\cong \bigoplus_{\rho\in\widehat G} a_{i,\rho} \C_\rho .
\end{equation}

The multiplicities~$a_{i,\rho}$ are computed as follows.
For a subset $u\subset I_i$, set
$$
    d_i(u):=\dim_\Q \widetilde H^0\bigl((P_{m_i})_u;\Q\bigr).
$$
By \eqref{eq:rational_additive_formula_RZP}, the rational first cohomology decomposes as a $G$-representation as
$$
    H^1(\RZ_{P_{m_i}};\Q) \cong \bigoplus_{u\subset I_i} d_i(u)\,\Q_{\varepsilon_u}. 
$$
After complexification, and after choosing the $G$-invariant complex structure on this surface, we have
$$
    H^1(\cC_i;\C) = H^{1,0}(\cC_i)\oplus H^{0,1}(\cC_i) =V_i\oplus \overline{V_i}.
$$
The $G$-action is holomorphic, so $\overline{V_i}$ is the complex conjugate representation of $V_i$.
Since $G$ is an elementary abelian $2$-group, all its complex characters are real-valued.
Therefore $V_i$ and $\overline{V_i}$ have the same character multiplicities.

It follows that the multiplicity of a character $\rho$ in $H^1(\cC_i;\C)$ is twice its multiplicity in $V_i$.
Hence
\begin{equation}\label{eq:a_irho}
    a_{i,\rho} = \frac12 \sum_{\substack{u\subset I_i\\ \varepsilon_u=\rho}} d_i(u).
\end{equation}
In particular, $a_{i,0}=\frac12 \sum_{u\in L_i} d_i(u)$.
The integrality of the right-hand side follows from the preceding decomposition.

\begin{proposition}\label{prop:tS-from-lambda}
    For every subset $S\subset[n]$, one has
    $$
        t_S= \sum_{\substack{(\rho_i)_{i\in S}\in \widehat G^S\\ \sum_{i\in S}\rho_i=0}} \prod_{i\in S} a_{i,\rho_i} = \frac{1}{|G|} \sum_{g\in G} \prod_{i\in S} \left( \sum_{\rho\in \widehat G} a_{i,\rho}(-1)^{\rho(g)} \right).
    $$
\end{proposition}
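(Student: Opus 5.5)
The plan is to compute $t_S$ directly from the isotypic decomposition \eqref{eq:V_i} and then convert the resulting count into character sums.

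Substituting $V_i\cong\bigoplus_\rho a_{i,\rho}\C_\rho$ into the tensor product and distributing gives
$$
    \bigotimes_{i\in S}V_i\cong\bigoplus_{(\rho_i)_{i\in S}\in\widehat G^S}\Bigl(\prod_{i\in S}a_{i,\rho_i}\Bigr)\,\C_{\sum_{i\in S}\rho_i}.
$$
Here I use that $\C_\rho\otimes\C_{\rho'}\cong\C_{\rho+\rho'}$. This holds because $(-1)^{\rho(\tau)}(-1)^{\rho'(\tau)}=(-1)^{(\rho+\rho')(\tau)}$, with addition in $\widehat G=\Hom(G,\Z_2)$. A one-dimensional summand $\C_\sigma$ has a nonzero $G$-invariant vector exactly when $\sigma=0$, and then the invariants are all of $\C_\sigma$. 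Taking $G$-invariants therefore picks out precisely the tuples with $\sum_{i\in S}\rho_i=0$, which gives the first equality. For $S=\varnothing$ the empty tensor product is $\C$, and the formula still correctly returns $t_\varnothing=1$.

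For the second equality I would use the standard averaging formula $\dim W^G=\frac1{|G|}\sum_{g\in G}\operatorname{tr}(g|_W)$, valid for any finite-dimensional complex representation $W$ of a finite group. Characters are multiplicative on tensor products, so
$$
    \operatorname{tr}\bigl(g|_{\bigotimes_{i\in S} V_i}\bigr)=\prod_{i\in S}\operatorname{tr}(g|_{V_i}).
$$
By \eqref{eq:V_i}, $\operatorname{tr}(g|_{V_i})=\sum_\rho a_{i,\rho}(-1)^{\rho(g)}$, which yields the stated expression.

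As a cross-check, the two formulas can also be matched directly. Expand the product over $i\in S$ and interchange the sums; then apply the orthogonality relation $\frac1{|G|}\sum_{g}(-1)^{\sigma(g)}=\delta_{\sigma,0}$ with $\sigma=\sum_{i\in S}\rho_i$. There is no genuine obstacle here. The only point needing care is the identification $\C_\rho\otimes\C_{\rho'}\cong\C_{\rho+\rho'}$ together with the fact that $\C_\sigma^G=0$ for $\sigma\ne0$, and this is exactly the orthogonality relation for the characters of the elementary abelian $2$-group $G$.
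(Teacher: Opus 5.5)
Your proof is correct and follows the paper's argument: you expand $\bigotimes_{i\in S}V_i$ via the decomposition \eqref{eq:V_i}, use $\C_\rho\otimes\C_{\rho'}\cong\C_{\rho+\rho'}$ to keep only the tuples with $\sum_{i\in S}\rho_i=0$, and obtain the second formula from the averaging formula. Your trace-multiplicativity step and the check for $S=\varnothing$ simply spell out details the paper leaves implicit.
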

\begin{proof}
    By definition, $t_S=\dim_\C\left(\bigotimes_{i\in S}V_i\right)^G$.
    Using the decomposition~\eqref{eq:V_i}, we have
    $$
        \bigotimes_{i\in S}\C_{\rho_i}\cong\C_{\sum_{i\in S}\rho_i}.
    $$
    Thus such a tensor factor contributes to the invariant part exactly when $\sum_{i\in S}\rho_i=0$.
    This gives the first formula.
The second formula is the standard averaging formula for the dimension of invariant vectors under a finite group action.
\end{proof}

\begin{example}\label{ex:hodge-diamond-nontrivial-character}
    Let $P=P_6\times P_6$.
    Consider the characteristic matrix
    $$
        \lambda=
        \begin{pNiceArray}{cccccc|cccccc}[first-row,first-col]
            &
            \scriptstyle F_{1,1}&\scriptstyle F_{1,2}&\scriptstyle F_{1,3}
            &\scriptstyle F_{1,4}&\scriptstyle F_{1,5}&\scriptstyle F_{1,6}
            &
            \scriptstyle F_{2,1}&\scriptstyle F_{2,2}&\scriptstyle F_{2,3}
            &\scriptstyle F_{2,4}&\scriptstyle F_{2,5}&\scriptstyle F_{2,6}
            \\
            \lambda_1&
            1&0&1&0&1&0&
            1&0&1&0&0&0
            \\
            \lambda_2&
            1&1&1&1&1&1&
            0&0&0&0&0&0
            \\
            \lambda_3&
            0&0&0&0&0&0&
            1&0&1&0&1&0
            \\
            \lambda_4&
            0&0&0&0&0&0&
            1&1&1&1&1&1
        \end{pNiceArray}.
    $$
    One checks directly that this is a characteristic matrix, and the small cover is factor-compatible.
    
    Let $X=(\cC_1\times \cC_2)/G$ be the projective model.
    Both curves have genus $\genus(6)=17$.
    The rows of $\lambda$ give the following relations in $\widehat G=\Hom(G,\Z_2)\cong \Z_2^{12}/\row(\lambda)$:
    $$
        F_{1,1}+F_{1,3}+F_{1,5}+F_{2,1}+F_{2,3}=0,
    $$
    $$
        F_{1,1}+\cdots+F_{1,6}=0,
    $$
    $$
        F_{2,1}+F_{2,3}+F_{2,5}=0, \qquad F_{2,1}+\cdots+F_{2,6}=0.
    $$
    In particular, $F_{1,1}+F_{1,3}+F_{1,5} = F_{2,1}+F_{2,3} = F_{2,5}$ as characters of $G$.
    
    We now compute $t_S$.
    For the first factor, $L_1=\row(\lambda)\cap \Z_2^{I_1}=\{0,\chi_1\}$.
    Since $d_1(0)=d_1(\chi_1)=0$, we have $a_{1,0}=0$ and $t_{\{1\}}=0$.
    
    Let $\rho:=F_{1,1}+F_{1,3}+F_{1,5}\in\widehat G$.
    The two subsets of $I_1$ representing $\rho$ are
    $$
        F_{1,1}+F_{1,3}+F_{1,5} \quad\text{ and }\quad F_{1,2}+F_{1,4}+F_{1,6}.
    $$
    Each gives three disjoint sides, hence three connected components, in the hexagon, so both have $d_1=2$.
    Thus, by~\eqref{eq:a_irho},
    $$
        a_{1,\rho}=\frac12(2+2)=2.
    $$
    
    For the second factor,
    $$
        L_2=\row(\lambda)\cap \Z_2^{I_2} = \{0, F_{2,1}+F_{2,3}+F_{2,5}, F_{2,2}+F_{2,4}+F_{2,6},\chi_2\}.
    $$
    Since the corresponding $d_2$-values are $0$, $2$, $2$, and $0$, we have
    $$
        a_{2,0}=\frac12(0+2+2+0)=2, \quad \text{ and } \quad t_{\{2\}}=2.
    $$
    
    The same character $\rho$ also appears in the second factor, since $\rho=F_{2,1}+F_{2,3}=F_{2,5}$ in $\widehat G$.
    The subsets of $I_2$ representing $\rho$ are the elements of $(F_{2,1}+F_{2,3})+L_2$.
    Their $d_2$-values are two $1$'s and two $0$'s.
    Thus,
    $$
        a_{2,\rho}=\frac12(1+0+1+0)=1.
    $$
    Here $d_i(u)$ is one less than the number of connected components of $(P_6)_u$ when $u\ne\varnothing$, and $d_i(\varnothing)=0$.
    For the remaining character classes, one groups the subsets of $I_i$ into cosets modulo $L_i$ and applies formula~\eqref{eq:a_irho} to each coset.
    This coset check shows that the only nonzero character appearing in both $V_1$ and $V_2$ is the character~$\rho$ above.
    Hence no other common character contributes to $t_{\{1,2\}}$.
    Therefore
    $$
        t_{\{1,2\}} = a_{1,\rho}a_{2,\rho} = 2.
    $$
    Consequently,
    $$
        T_0=1,\qquad T_1=2,\qquad T_2=2.
    $$
    
    By Theorem~\ref{thm:hodge-polynomial-factor-compatible},
    \begin{align*}
        H_X(u,v) &= (1+uv)^2 + 2(u+v)(1+uv) + 2(u+v)^2 \\
            &= 1+2(u+v)+2(u^2+v^2)+6uv + 2(u^2v+uv^2)+u^2v^2.  
    \end{align*}
    Therefore the Hodge diamond is
    $$
        \begin{array}{ccccc}
        &&1&&\\
        &2&&2&\\
        2&&6&&2\\
        &2&&2&\\
        &&1&&
        \end{array}.
    $$
\end{example}

\begin{lemma}\label{lem:betti-determine-hodge}
The rational Betti numbers of $X$ determine its Hodge polynomial and hence its Hodge diamond.
\end{lemma}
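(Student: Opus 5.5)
By Theorem~\ref{thm:hodge-polynomial-factor-compatible}, the Hodge polynomial of $X$ is determined by the integers $T_0,\dots,T_n$. The plan is therefore to show that $T_0,\dots,T_n$ can be recovered from the rational Betti numbers $b_k(X)$. The same theorem gives
$$
    P_X(t)=\sum_{s=0}^n T_s\,(2t)^s(1+t^2)^{n-s}.
$$
It suffices to show that the polynomials $f_s(t):=(2t)^s(1+t^2)^{n-s}$, for $s=0,\dots,n$, are linearly independent over $\Q$. Then $P_X(t)$, which is determined by the Betti numbers, uniquely determines the coefficients $T_s$. Substituting these into $\sum_s T_s(u+v)^s(1+uv)^{n-s}$ recovers $H_X(u,v)$, and hence every $h^{p,q}(X)$.

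For the linear independence I would use a triangularity argument. The lowest-degree term of $f_s(t)$ is $2^s t^s$, because $(1+t^2)^{n-s}$ has constant term $1$. So, ordered by $t$-adic valuation, the family $f_0,\dots,f_n$ is triangular with nonzero diagonal entries $2^s$.

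Concretely, the $T_s$ can be solved recursively from the low-degree Betti numbers. Comparing coefficients of $t^k$ for $k=0,1,\dots,n$ gives
$$
    b_k(X)=2^k T_k+\sum_{s<k} c_{k,s}T_s,
$$
where the $c_{k,s}$ are explicit binomial-type constants. Indeed, only $f_s$ with $s\le k$ contribute to the $t^k$ coefficient. This lets us solve for $T_0,T_1,\dots,T_n$ in turn.

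Alternatively, one could substitute $t^2$-free coordinates: write $t=\tan\theta$-style, or use $x=2t/(1+t^2)$ after dividing by $(1+t^2)^n$. Then $P_X(t)/(1+t^2)^n=\sum_s T_s x^s$ is a polynomial in $x$, and the coefficients $T_s$ are visibly unique. I would present the triangular argument, since it is cleaner.

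There is no real obstacle here; the only point to check is that the coefficient extraction is legitimate, which holds because each $f_s$ has degree at most $2n$ and the first $n+1$ coefficients already suffice. I would finish by remarking that, since $H_X(u,v)$ is then a fixed function of the $T_s$, the full Hodge diamond is determined.
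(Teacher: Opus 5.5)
Your proposal is correct and matches the paper's proof. Both read off the $T_s$ recursively from $P_X(t)=\sum_s T_s(2t)^s(1+t^2)^{n-s}$, using that the $s$th term has lowest-degree part $2^s t^s$, and then substitute into the formula for $H_X(u,v)$; your triangular system $b_k(X)=2^kT_k+\sum_{s<k}c_{k,s}T_s$ for $0\le k\le n$ just spells out that recursion.
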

\begin{proof}
By Theorem~\ref{thm:hodge-polynomial-factor-compatible},
    $$
        P_X(t)=\sum_{s=0}^n T_s\,(2t)^s(1+t^2)^{n-s}.
    $$
The polynomial $(2t)^s(1+t^2)^{n-s}$ has lowest nonzero degree $s$.
Therefore the coefficients $T_s$ can be recovered recursively from $P_X(t)$, starting with $s=0$ and increasing~$s$.
Thus the rational Betti numbers determine all $T_s$.
The formula in Theorem~\ref{thm:hodge-polynomial-factor-compatible} then determines the Hodge polynomial.
    \end{proof}

\begin{remark}\label{rem:hodge-independent-choices}
    In particular, the Hodge diamond of the projective model constructed above is independent of the auxiliary choices made in the construction.
    This includes the choices of invariant complex structures on the non-square surface factors and, in the square case, the choice of a compatible pairing whenever more than one such pairing is available.
\end{remark}

We now show that, for the projective models constructed above, the Hodge diamond is already determined by the mod~$2$ cohomology ring of the underlying small cover.

\begin{proposition}\label{prop:mod2-determines-rational-betti-products-polygons}
    Let $M=M(P,\lambda)$ be any small cover over a product of polygons.
    Then the graded mod~$2$ cohomology ring $H^\ast(M;\Z_2)$ determines the rational Betti numbers of $M$.
\end{proposition}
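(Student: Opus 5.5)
The approach is to avoid identifying facet classes inside $H^\ast(M;\Z_2)$, which would be a rigidity-type problem. Instead I will show that the integral cohomology of $M$ has only $2$-torsion and then recover the rational Betti numbers from the Bockstein spectral sequence. Because $H^\ast(M;\Z_2)$ is generated in degree one by the Davis--Januszkiewicz formula \eqref{eq:mod2cohom_smallcover}, that spectral sequence is computable from the graded ring alone.

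\emph{Step 1: torsion-freeness of every $P_\omega$.} Let $\omega\in\Z_2^m$ and split it as $\omega=(\omega^1,\dots,\omega^n)$ according to the polygon factors. Put $A_i=(P_{m_i})_{\omega^i}\subset P_{m_i}$. Then
$$
P_\omega=\bigcup_{i=1}^n P_{m_1}\times\cdots\times A_i\times\cdots\times P_{m_n}.
$$
Since every $P_{m_j}$ is contractible, this union is homotopy equivalent to the join $A_1\ast\cdots\ast A_n$, with the convention that an empty $A_i$ simply drops out. The homotopy equivalence is the standard one for such unions of products with contractible ambient factors; I would prove it by induction on $n$ using a pushout.

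Each $A_i$ is one of three things: empty, a disjoint union of arcs (homotopy equivalent to finitely many points), or the whole boundary circle. In every case $H^\ast(A_i;\Z)$ is torsion-free. The join of spaces with free homology again has free homology, because $\widetilde H_{\ast+1}(A\ast B)\cong\bigoplus \widetilde H(A)\otimes\widetilde H(B)$ and the Tor terms vanish. Hence $H^\ast(P_\omega;\Z)$ is torsion-free for every $\omega$, in particular for every $\omega\in\row(\lambda)$. By Corollary~\ref{thm:Cai-Choi-two-torsion}, every torsion element of $H^\ast(M;\Z)$ therefore has order $2$.

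\emph{Step 2: the Bockstein spectral sequence.} Since all torsion in $H^\ast(M;\Z)$ has order exactly $2$, the mod~$2$ Bockstein spectral sequence collapses at $E_2$. Its $E_1$-term is $H^\ast(M;\Z_2)$ with $d_1=Sq^1$, and $E_\infty\cong (H^\ast(M;\Z)/\mathrm{torsion})\otimes\Z_2$. Consequently, for every $q$,
$$
b_q(M)=\dim_{\Z_2}\ker\bigl(Sq^1\colon H^q\to H^{q+1}\bigr)-\dim_{\Z_2}\im\bigl(Sq^1\colon H^{q-1}\to H^{q}\bigr),
$$
where $H^q$ stands for $H^q(M;\Z_2)$.

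\emph{Step 3: $Sq^1$ is determined by the ring.} By \eqref{eq:mod2cohom_smallcover}, the ring $H^\ast(M;\Z_2)$ is generated by $H^1$. On a degree-one class, $Sq^1x=x^2$. Moreover $Sq^1$ is a derivation by the Cartan formula. Hence $Sq^1$ on all of $H^\ast(M;\Z_2)$ is determined by the graded ring structure: an isomorphism of graded rings commutes with $Sq^1$, since it commutes with squaring on $H^1$ and respects products. Combined with Step 2, this shows that the graded ring determines every $b_q(M)$.

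I expect the main obstacle to be Step 1, namely justifying the homotopy equivalence $P_\omega\simeq A_1\ast\cdots\ast A_n$ and handling the degenerate cases carefully. If some $A_i$ is empty, that factor is dropped from the join; if all $A_i$ are empty, then $P_\omega=\varnothing$, which is consistent with the convention $\widetilde H^{-1}(\varnothing;\Q)=\Q$. Steps 2 and 3 are standard once order-$2$ torsion is established.
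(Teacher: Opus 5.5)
Your proposal is correct and follows essentially the same route as the paper: you show each $P_\omega$ has torsion-free integral cohomology via a join decomposition, apply Cai--Choi to get only order-$2$ torsion, use collapse of the Bockstein spectral sequence at $E_2$, and observe that $Sq^1$ is determined by the ring because it is generated in degree one. The only cosmetic difference is that you obtain $P_\omega\simeq A_1\ast\cdots\ast A_n$ directly by an inductive pushout, whereas the paper uses the nerve theorem for the facet cover, whose nerve is the join of the one-factor nerves.
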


\begin{proof}
    Let $P=P_{m_1}\times\cdots\times P_{m_n}$.
    For a polygon, any union of sides is either the whole boundary circle or a disjoint union of intervals and points, hence has torsion-free integral cohomology. 
    For a product of polygons, the facet cover of~$P_\omega$ has contractible nonempty intersections, and its nerve is obtained from the corresponding one-factor nerves by joins. 
    Hence, by the nerve theorem and the Künneth formula for joins, $H^\ast(P_\omega; \Z)$ is torsion-free.
    
    By Corollary~\ref{thm:Cai-Choi-two-torsion}, the torsion subgroup of $H^\ast(M;\Z)$ is a direct sum of copies of~$\Z_2$.
    Hence the Bockstein spectral sequence associated with
    $$
        0\to \Z \xrightarrow{\times 2}\Z\to \Z_2\to 0
    $$
    collapses at the $E_2$-page, and this $E_2$-page is the mod~$2$ reduction of the free part of $H^\ast(M;\Z)$.
    
    The first differential in this Bockstein spectral sequence is $Sq^1$.
    Since $H^\ast(M;\Z_2)$ is generated in degree one as~\eqref{eq:mod2cohom_smallcover}, the operation $Sq^1$ is determined by the graded ring structure.
    Namely, for every degree-one class $x$, one has $Sq^1(x)=x^2$, and the Cartan formula extends this rule to the whole ring.
    Consequently the graded ring determines
    $$
        E_2^\ast=\ker Sq^1/\im Sq^1.
    $$
    Since there is no torsion of order greater than $2$, the dimensions of these $E_2$-terms are precisely the ranks of the free parts of $H^\ast(M;\Z)$, equivalently the rational Betti numbers of $M$.
    See~\cite[Lemma~8.1]{Choi-Masuda-Suh2010TAMS}.
\end{proof}

\begin{theorem}\label{thm:mod2-rigidity-factor-compatible}
    Let $M=M(P,\lambda)$ be a factor-compatible small cover over a product of polygons, and let $X$ be the projective model from Theorem~\ref{thm:factor-compatible-projective}.
    Then the graded mod~$2$ cohomology ring determines the Hodge diamond of $X$.
\end{theorem}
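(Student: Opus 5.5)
The plan is to chain together three results already established: Proposition~\ref{prop:mod2-determines-rational-betti-products-polygons}, the diffeomorphism $X\cong M$ from Theorem~\ref{thm:factor-compatible-projective}, and Lemma~\ref{lem:betti-determine-hodge}.

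First, Proposition~\ref{prop:mod2-determines-rational-betti-products-polygons} says that the graded ring $H^\ast(M;\Z_2)$ determines the rational Betti numbers $b_k(M)$ for every $k$. This holds for any small cover over a product of polygons, so in particular for a factor-compatible one.

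Second, Theorem~\ref{thm:factor-compatible-projective} says that the underlying smooth manifold of $X=Y/G$ is diffeomorphic to $M$. Rational cohomology is a topological invariant, so $b_k(X)=b_k(M)$ for all $k$. Hence the Poincar\'e polynomial $P_X(t)$ is determined by $H^\ast(M;\Z_2)$.

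Third, Lemma~\ref{lem:betti-determine-hodge} recovers $T_0,\dots,T_n$ recursively from $P_X(t)$. Here $n$ is half the top nonvanishing degree of $H^\ast(M;\Z_2)$, so it is also read off from the ring. Theorem~\ref{thm:hodge-polynomial-factor-compatible} then gives $H_X(u,v)=\sum_s T_s(u+v)^s(1+uv)^{n-s}$, and from this the Hodge diamond.

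The one point needing care is what ``determines'' means. It should mean that two factor-compatible small covers $M,M'$ with isomorphic graded mod~$2$ cohomology rings have projective models $X,X'$ with equal Hodge numbers. Each of the three steps is an equality of numerical invariants, so this version follows directly: equal rings give equal Betti numbers, hence equal $T_s$, hence equal $H_X$. The argument also does not depend on which compatible regrouping or which invariant complex structures were chosen, in line with Remark~\ref{rem:hodge-independent-choices}.

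I expect no serious obstacle, since all the substance is in the cited results. The only thing to state explicitly is that the Hodge formula is uniform: the right-hand side of Theorem~\ref{thm:hodge-polynomial-factor-compatible} depends only on $n$ and the $T_s$, not on the particular $G$-action.
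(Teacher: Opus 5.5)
Your proposal is correct and follows the paper's proof exactly. The paper chains the same three steps: Proposition~\ref{prop:mod2-determines-rational-betti-products-polygons} gives the rational Betti numbers, the diffeomorphism $X\cong M$ transfers them to $X$, and Lemma~\ref{lem:betti-determine-hodge} recovers the Hodge diamond; your observation that $n$ is read off from the top degree of the ring is a harmless extra detail.
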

\begin{proof}
    By Proposition~\ref{prop:mod2-determines-rational-betti-products-polygons}, the graded mod~$2$ cohomology ring determines the rational Betti numbers of $M$.
    Since the underlying smooth manifold of $X$ is diffeomorphic to $M$, these are also the rational Betti numbers of $X$.
    By Lemma~\ref{lem:betti-determine-hodge}, the rational Betti numbers of $X$ determine the Hodge polynomial and hence the Hodge diamond of $X$.
\end{proof}

\begin{remark}
    Theorem~\ref{thm:mod2-rigidity-factor-compatible} is related to the mod~$2$ cohomological rigidity problem for small covers.
    For background on this problem, see Choi--Masuda--Suh~\cite{Choi-Masuda-Suh2011}.
    
    In general, the graded mod~$2$ cohomology ring is not a complete invariant of the diffeomorphism type~\cite{Masuda2010}.
    However, in the real Bott case, there are positive rigidity results~\cite{Kamishima-Masuda2009,Choi-Masuda-Oum2017}.
    Thus, although the graded mod~$2$ cohomology ring is not a complete invariant for small covers in general, it still carries substantial topological information in important subclasses.
    The author proved in \cite{Choi2026} that mod~$2$ cohomological rigidity holds for factor-compatible small covers of dimension~$4$.
    However, this is not known in higher dimensions.
    
    Theorem~\ref{thm:mod2-rigidity-factor-compatible} points in a related but different direction.
    Our result suggests that the graded mod~$2$ cohomology ring also carries meaningful geometric information.
    This suggests the broader question of how much geometric information about a small cover is encoded in its graded mod~$2$ cohomology ring.
\end{remark}

\section{A block triangular form for factor-compatible small covers}\label{sec-block-triangular}

Let $P=\prod_{i=1}^n P_{m_i}$ be a product of polygons, and let $M(P,\lambda)$ be a factor-compatible small cover over $P$.
Let $I_i = \{F_{i,1}, \ldots, F_{i,m_i}\}$ be the facet set from the $i$th polygon factor as in \eqref{eq:facets_index}.

\begin{definition}
    Let the rows be divided into $n$ blocks of size $2$, and let the columns be divided according to the polygon factors $I_1,\ldots,I_n$.
    A characteristic matrix is said to be \emph{block lower triangular} if it has the form
    $$
        \lambda'=
        \begin{pmatrix}
        \Lambda_{11} & 0            & \cdots & 0\\
        \Lambda_{21} & \Lambda_{22} & \cdots & 0\\
        \vdots       & \vdots       & \ddots & \vdots\\
        \Lambda_{n1} & \Lambda_{n2} & \cdots & \Lambda_{nn}
        \end{pmatrix},
    $$
    where $\Lambda_{ij}$ is a $2\times m_j$ matrix.
    We say that the diagonal blocks are \emph{polygonal characteristic blocks} if, for each $i$, $\Lambda_{ii}$ is a characteristic matrix on the polygon $P_{m_i}$; that is, for every adjacent pair $F_{i,k},F_{i,k+1}$, the corresponding two columns of $\Lambda_{ii}$ form a basis of $\Z_2^2$.
\end{definition}

We shall use the following elementary linear algebra lemmas.

\begin{lemma}[{\cite[Lemma~3.3]{Masuda-Panov2008}}] \label{lem:principal-minor lemma}
    Let $A=(a_{ij})\in M_n(\Z_2)$.
    Suppose that every principal minor of $A$, including the determinant of $A$ itself, is equal to $1$.
    Then, after a simultaneous permutation of rows and columns, $A$ is upper triangular with all diagonal entries equal to $1$.
\end{lemma}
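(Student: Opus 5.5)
We prove this by induction on $n$, the base case $n=1$ being trivial: a $1\times 1$ matrix with all principal minors equal to $1$ is $(1)$. The plan is to find a row and column with which to start the triangular form, delete them, and apply induction.

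The key step is to find an index $k$ whose column is $e_k$, i.e.\ $a_{ik}=0$ for $i\neq k$ (and $a_{kk}=1$ is automatic from the $1\times1$ minors). Once we have such a $k$, we move it to the first position by a simultaneous permutation. Deleting row and column $k$ leaves a matrix $A'$ whose principal minors are principal minors of $A$, so all of them equal $1$. By induction, a simultaneous permutation makes $A'$ upper triangular with unit diagonal. Putting $k$ first, its column is $e_1$, so column $1$ of the whole matrix is $e_1$ while row $1$ is arbitrary, and the resulting matrix is upper triangular with all diagonal entries $1$.

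To find $k$, use the $2\times 2$ minors. For $i\neq j$ we have $a_{ii}a_{jj}+a_{ij}a_{ji}=1$, hence $a_{ij}a_{ji}=0$. So the directed graph $D$ with an edge $i\to j$ whenever $i\neq j$ and $a_{ij}=1$ has no $2$-cycles. We claim $D$ has no directed cycles at all. Take a shortest cycle $i_1\to i_2\to\cdots\to i_\ell\to i_1$, of length $\ell\ge 3$. By minimality it has no chords: any chord would produce a shorter cycle, since every edge $i_a\to i_b$ shortcuts one of the two arcs. Consider the principal submatrix on $\{i_1,\dots,i_\ell\}$. Its off-diagonal support is exactly the cycle, so over $\Z_2$ its determinant is the sum over permutations supported on diagonal entries and cycle edges. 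Those permutations are the identity and the full cycle, so the determinant is $1+1=0$. This contradicts the hypothesis, so $D$ is acyclic.

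An acyclic finite digraph has a sink, a vertex with no outgoing edges, and also a source, a vertex with no incoming edges. A source $k$ has $a_{ik}=0$ for all $i\neq k$, which is exactly the column condition needed. Alternatively, one can take a topological order of $D$ directly: ordering the indices so that every edge $i\to j$ goes forward already makes $A$ upper triangular, which gives the lemma at once without the induction.

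The main obstacle is the chordless-cycle determinant computation. One has to check that the only permutations with nonzero contribution are the identity and the single $\ell$-cycle, and this is exactly where minimality of the cycle is used.
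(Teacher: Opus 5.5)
Your proof is correct and complete. The $2\times 2$ principal minors give $a_{ij}a_{ji}=0$, so the support digraph has no $2$-cycles. A shortest directed cycle is then chordless, and its principal minor over $\Z_2$ is $1+1=0$, a contradiction. So the digraph is acyclic, and a topological order of it gives the upper triangular form with unit diagonal.

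The paper itself gives no proof of this lemma; it only cites \cite[Lemma~3.3]{Masuda-Panov2008}. Your argument therefore stands as a self-contained proof rather than a variant of one in the text.

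Your route also shows slightly more than the statement:
\begin{enumerate}
\item It uses only the principal minors of order $1$ and $2$, plus those indexed by vertex sets of induced directed cycles.
\item It works verbatim over any integral domain. There the cycle minor becomes $1\pm\prod_a a_{i_a i_{a+1}}\neq 1$, because a product of nonzero entries is nonzero.
\end{enumerate}

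Your ``source'' formulation, that some column of $A$ equals a standard basis vector $e_k$, is exactly what the paper extracts from this lemma. In the proof of Lemma~\ref{lem:colored triangularization} it uses the lemma only to conclude that some $b_i$ equals $a_i$.
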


\begin{lemma} \label{lem:colored triangularization}
    Let $E$ be an $n$-dimensional vector space over $\Z_2$, and let
    $$
        D_1,\ldots,D_n\subset E^\ast\setminus\{0\}
    $$
    be nonempty subsets.
    Suppose that every transversal~$d_1\in D_1,\ldots,d_n\in D_n$ is a basis of $E^\ast$.
    Then, after permuting the subsets~$D_i$, there exists a basis $\varepsilon_1,\ldots,\varepsilon_n$ of~$E^\ast$ such that
    $$
        D_i\subset \varepsilon_i+ \langle \varepsilon_{i+1},\ldots,\varepsilon_n\rangle \qquad \text{for every } i=1,\ldots,n.
    $$
    Here the span is interpreted as $0$ when $i=n$.
\end{lemma}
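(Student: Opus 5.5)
We argue by induction on $n=\dim E$, using Lemma~\ref{lem:principal-minor lemma} to produce a candidate first set $D_i$ and first basis vector. Fix a basis $x_1,\dots,x_n$ of $E$. For any transversal $d=(d_1,\dots,d_n)$, the matrix $A(d)=(d_i(x_j))$ is invertible. The goal is to find an index $i_0$ and a hyperplane-type structure that works simultaneously for all transversals.

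\textbf{Choosing the first set.} Pick one transversal $d^0$, and use the basis of $E$ dual to $d^0$. The condition for a transversal $d$ reads $\det(d_i(x_j))=1$. Now vary a single coordinate: replacing $d^0_i$ by any $d\in D_i$ still gives a basis. Hence for every $d\in D_i$, the coefficient of $d^0_i$ in $d$ is $1$, i.e.\ $d\in d^0_i+\langle d^0_k: k\ne i\rangle$.

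Replacing several coordinates $d^0_{k}$ ($k\in K$) at once by $d_k=d^0_k+\sum_{l\ne k}c_{kl}d^0_l$ gives a matrix $I+C_K$, with $C_K$ supported off the diagonal on $K\times K$ after projection. Here the coordinates outside $K$ are unaffected modulo the span of the $d^0_l$ with $l\notin K$. Thus the determinant of $I+C_K$ restricted to $K$ must be $1$ for every $K$ and every choice of elements.

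Define a directed graph with an edge $k\to l$ if some $d\in D_k$ has nonzero $d^0_l$-coefficient. The key claim is that this graph is acyclic. Suppose there were a shortest directed cycle $k_1\to\cdots\to k_s\to k_1$. Choose witnesses $d_{k_t}$ realizing each edge. The $K=\{k_1,\dots,k_s\}$ principal submatrix then has $1$'s on the diagonal, $1$'s along the cycle, and, by minimality, no chords that form shorter cycles. I expect its determinant to equal $1+1=0$ (identity permutation plus cycle permutation), contradicting invertibility.

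\textbf{The main obstacle.} Chords among the witnesses could produce other permutations contributing to the determinant. The plan is to handle this through Lemma~\ref{lem:principal-minor lemma}. For each fixed choice of witnesses, the matrix with $1$'s on the diagonal has all principal minors equal to $1$, because every sub-transversal combined with $d^0$ elsewhere is still a basis. So it is triangular after a simultaneous permutation. A matrix of that form cannot contain a directed cycle, which contradicts the existence of the cycle directly.

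\textbf{Triangular ordering and the induction step.} Acyclicity gives a topological order. Reorder the $D_i$ so that edges go only from smaller to larger indices, and set $\varepsilon_i=d^0_i$. Then every $d\in D_i$ lies in $\varepsilon_i+\langle\varepsilon_{i+1},\dots,\varepsilon_n\rangle$, which is exactly the required conclusion. Formally, this is the step where induction on $n$ is used: remove a source or sink and pass to $E^\ast/\langle\varepsilon_n\rangle$ or to a hyperplane, then apply the inductive hypothesis.
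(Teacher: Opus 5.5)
Your proof is correct, but it takes a different route from the paper.

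\textbf{The paper's route.} It first shows that some $D_i$ must be a singleton. If every $D_i$ had two elements $a_i\neq b_i$, write the $b_i$ in the basis $a_1,\ldots,a_n$. Every principal minor of the resulting matrix is $1$, so the principal-minor lemma makes it triangular after a simultaneous permutation. That forces $b_i=a_i$ for some $i$, a contradiction. The paper then inducts on $n$ by passing to $E^\ast/\langle\varepsilon_n\rangle$ and choosing arbitrary lifts.

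\textbf{Your route.} You fix one reference transversal $d^0$ and note that every $d\in D_i$ has $d^0_i$-coefficient $1$. You record the remaining coefficients as a digraph. Suppose the digraph had a directed cycle, and pick one witness per vertex. This gives a matrix with $1$'s on the diagonal, and each of its principal minors is the determinant of an actual transversal, hence equal to $1$. The same principal-minor lemma then makes the matrix triangular after a simultaneous permutation, which cannot hold if the matrix contains a directed cycle. Ordering the $D_i$ topologically then finishes the proof directly with $\varepsilon_i=d^0_i$.

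\textbf{Comparison.} Both arguments use the principal-minor lemma in the same way. Yours avoids the quotient-and-lift induction and produces the basis explicitly: any transversal, suitably ordered, works. The paper's argument isolates the structural fact that some $D_i$ is a singleton. In your language, the last vertex in the topological order is a sink, so $D_n=\{d^0_n\}$.

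\textbf{Presentation.} A few fixes are needed.
\begin{itemize}
\item Edges should be defined only for $l\neq k$. Otherwise every vertex carries a loop, since every element of $D_k$ has $d^0_k$-coefficient $1$.
\item The opening claim and the closing remark about induction on $n$ are superfluous. Once the sets are ordered topologically, $\varepsilon_i=d^0_i$ already satisfies $D_i\subset\varepsilon_i+\langle\varepsilon_{i+1},\ldots,\varepsilon_n\rangle$.
\item The shortest-cycle and ``$1+1=0$'' heuristics can be dropped. The principal-minor step excludes every cycle regardless of chords; you only need the cycle to be simple so that each vertex gets one witness.
\end{itemize}
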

\begin{proof}
    We first prove that at least one of the sets $D_i$ is a singleton.
    Suppose not.
    Choose $a_i\in D_i$ and $b_i\in D_i\setminus\{a_i\}$ for each $i$.
    Since every transversal is a basis, $a_1,\ldots,a_n$ is a basis of $E^\ast$.
    Write $b_i=\sum_{j=1}^n c_{ji}a_j$, and let $C=(c_{ji})$.
        
    For any subset $S\subset\{1,\ldots,n\}$, replace $a_i$ by $b_i$ precisely for $i\in S$.
    The resulting $n$-tuple is again a transversal, hence a basis.
    The determinant of the corresponding change-of-basis matrix is therefore $1$.
    Expanding along the columns indexed by the complement of $S$, this determinant is the principal minor of $C$ indexed by $S$.
    Hence every principal minor of $C$ is $1$.
        
    By Lemma~\ref{lem:principal-minor lemma}, after a simultaneous permutation of rows and columns, $C$ is upper triangular with diagonal entries $1$.
    But then the first column of the permuted matrix is the first standard basis vector.
    This says that $b_i=a_i$ for the corresponding index~$i$, contradicting the choice $b_i\neq a_i$.
    Therefore some $D_i$ is a singleton.
        
    We now argue by induction on $n$.
    The case $n=1$ is immediate.
    For $n>1$, after permuting the $D_i$'s, assume $D_n=\{\varepsilon_n\}$.
    Let $\overline{E^\ast}=E^\ast/\langle \varepsilon_n\rangle$, and let $\overline D_i$ be the image of $D_i$ in $\overline{E^\ast}$, for $i=1,\ldots,n-1$.
    No element of any $\overline D_i$ is zero; otherwise some $D_i$ would contain $\varepsilon_n$, and choosing this element together with the element of~$D_n$ would give a dependent transversal.
        
    Moreover, every transversal from $\overline D_1,\ldots,\overline D_{n-1}$ is a basis of $\overline{E^\ast}$, because adjoining $\varepsilon_n$ gives a basis of $E^\ast$.
    By induction, after permuting $\overline D_1,\ldots,\overline D_{n-1}$, there is a basis $\overline\varepsilon_1,\ldots,\overline\varepsilon_{n-1}$ of~$\overline{E^\ast}$ such that
    $$
        \overline D_i\subset \overline\varepsilon_i+ \langle \overline\varepsilon_{i+1},\ldots, \overline\varepsilon_{n-1} \rangle.
    $$
    Choose arbitrary lifts $\varepsilon_1,\ldots,\varepsilon_{n-1}\in E^\ast$ of these basis elements.
    Then $\varepsilon_1,\ldots,\varepsilon_n$ is a basis of $E^\ast$, and the desired inclusions follow.
\end{proof}

\begin{theorem}\label{thm:blockize}
    Let $P=\prod_{i=1}^n P_{m_i}$ be a product of polygons, and let $\lambda$ be a factor-compatible characteristic matrix on $P$.
    Then, after permuting the polygon factors, $\lambda$ is Davis--Januszkiewicz equivalent to a block lower triangular characteristic matrix
    $$
        \lambda'=
        \begin{pmatrix}
        \Lambda_{11} & 0            & \cdots & 0\\
        \Lambda_{21} & \Lambda_{22} & \cdots & 0\\
        \vdots       & \vdots       & \ddots & \vdots\\
        \Lambda_{n1} & \Lambda_{n2} & \cdots & \Lambda_{nn}
        \end{pmatrix},
    $$
    where each $\Lambda_{ii}$ is a characteristic matrix on $P_{m_i}$.
    Moreover, the first row of each $\Lambda_{ii}$ may be chosen to be the all-one vector.
\end{theorem}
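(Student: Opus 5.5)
We set $E=\Z_2^{2n}$ and view each characteristic vector $\lambda_{i,j}$ as an element of $E$. For each (regrouped) factor $i$, factor-compatibility gives $\chi_i\in\row(\lambda)$. So there is a functional $\varphi_i\in E^\ast$ with $\varphi_i(\lambda_{i,j})=1$ for all $j$ and $\varphi_i(\lambda_{k,j})=0$ for $k\neq i$. These $\varphi_1,\ldots,\varphi_n$ are linearly independent, since their values separate the factors.

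The plan is to produce a flag adapted to the factors by applying Lemma~\ref{lem:colored triangularization} in a quotient or dual setting. For each factor $i$, consider the $2$-dimensional "edge planes" $W_{i,j}=\langle\lambda_{i,j},\lambda_{i,j+1}\rangle\subset E$. By Lemma~\ref{lem:nonsingularity-product-polygons}, every transversal choice $(W_{1,j_1},\ldots,W_{n,j_n})$ spans $E$ as a direct sum. The cleanest way to reduce to Lemma~\ref{lem:colored triangularization} is the following. Let $U=\bigcap_i\ker\varphi_i$, which has dimension $n$. Let $e_{i,j}=\lambda_{i,j}+\lambda_{i,j+1}\in\ker\varphi_i$, a nonzero vector, and observe that $W_{i,j}=\langle\lambda_{i,j},e_{i,j}\rangle$. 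Work in the quotient by $\langle x_1,\ldots,x_n\rangle$, where $x_i$ is a fixed $\lambda_{i,1}$. Equivalently, project each $e_{i,j}$ to $\bar E=E/\langle\lambda_{1,j_1},\ldots\rangle$. I expect the main obstacle to be exactly here: choosing the right $n$-dimensional space and the sets $D_i$ so that "every transversal is a basis" follows from the determinant condition.

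My first attempt is as follows. Write $\lambda_{i,j}=x_i+e'_{i,j}$ with $e'_{i,j}\in\bigcap_{k}\ker\varphi_k$ modulo a complement. In the quotient $E\to E/\langle x_1,\ldots,x_n\rangle\cong U'$ (dimension $n$), the determinant condition for $B_{\bj}$ becomes the statement that the images of the differences $\bar e_{i,j}$, one from each factor, form a basis. Take $D_i=\{\bar e_{i,j}\}$, regarded inside the dual space after transposing. Lemma~\ref{lem:colored triangularization} then gives, after permuting factors, a basis $\varepsilon_1,\ldots,\varepsilon_n$ with $D_i\subset\varepsilon_i+\langle\varepsilon_{i+1},\ldots,\varepsilon_n\rangle$.

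Next we build a new basis of $E^\ast$ ordered in blocks $(\varphi_i,\psi_i)$, where $\psi_i$ is dual to $\varepsilon_i$ and suitably lifted, adjusting by the $\varphi_k$ so that $\psi_i$ kills $x_k$. Changing basis of $E^\ast$ is a row operation, so the resulting matrix is D--J equivalent to $\lambda$. In the new rows, the entries of factor $j$ in the row block $i$ vanish for $i<j$ by the triangularity $D_j\subset\varepsilon_j+\langle\varepsilon_{j+1},\ldots\rangle$ together with $\varphi_i(\lambda_{j,\cdot})=0$ for $i\ne j$. (The orientation of the triangle, upper versus lower, may be fixed by reversing the order of the factors.) This gives block lower triangular form, with the first row of $\Lambda_{ii}$ equal to $\varphi_i$ restricted to $I_i$, which is the all-one vector.

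Finally, we show that each $\Lambda_{ii}$ is a polygonal characteristic block. Because the matrix is block triangular, $\det B_{\bj}(\lambda')=\prod_i\det(\text{the }2\times2\text{ block of }\Lambda_{ii}\text{ on columns }j_i,j_i+1)$. Since $\det B_{\bj}(\lambda')=1$ for every $\bj$, each of these $2\times 2$ minors equals $1$, so adjacent columns of $\Lambda_{ii}$ form a basis of $\Z_2^2$.
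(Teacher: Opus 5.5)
Your proposal is correct and is essentially the paper's proof written in the column space $\Z_2^{2n}$ instead of in $\row(\lambda)$. Your $U=\bigcap_i\ker\varphi_i\cong \Z_2^{2n}/\langle x_1,\ldots,x_n\rangle$ is canonically dual to the paper's $\row(\lambda)/\langle\chi_1,\ldots,\chi_n\rangle$, and your vectors $e_{i,j}=\lambda_{i,j}+\lambda_{i,j+1}$ are exactly the paper's functionals $d_{i,j}$ under this duality. Your $\psi_i$, normalized to kill every $x_k$, play the role of the paper's lifts $\eta_i$ adjusted by the $\chi_k$.

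The only variation is in the last step: you certify the diagonal blocks through the factorization $\det B_{\bj}(\lambda')=\prod_i\det(\cdot)$, whereas the paper reads off the columns $\binom{1}{a},\binom{1}{a+1}$ directly. Your transversal-basis claim, which you assert without proof, does hold. The reason is that each $e_{i,j}$ lies in $U$, while the $\lambda_{i,j_i}$ map onto a basis of $\Z_2^{2n}/U$; so $B_{\bj}$ is invertible exactly when the $e_{i,j_i}$ form a basis of $U$.
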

\begin{proof}
    By factor-compatibility, and by the convention fixed in Definition~\ref{def:factor-compatible}, we have $\chi_i\in\row(\lambda)$ for every factor $i$.
    Set $C=\langle \chi_1,\ldots,\chi_n\rangle\subset \row(\lambda)$.
    The supports of the $\chi_i$'s are disjoint, so $\dim C=n$.
    Since $\lambda$ is a characteristic matrix of a $2n$-dimensional small cover, $\dim \row(\lambda)=2n$.
    Hence $E:=\row(\lambda)/C$ has dimension~$n$.
        
    For each factor $i$ and each adjacent pair $F_{i,k},F_{i,k+1}$, define a linear functional $d_{i,k} \colon E\to \Z_2$ by
    $$
        d_{i,k}([w])=w(F_{i,k})+w(F_{i,k+1}).
    $$
    This is well-defined because adding $\chi_i$ changes both values $w(F_{i,k})$ and $w(F_{i,k+1})$ by $1$, while adding $\chi_j$ for $j\neq i$ does not affect these two coordinates.
    Put
    $$
            D_i=\{d_{i,1},\ldots,d_{i,m_i}\}\subset E^\ast.
    $$
        
    We claim that every transversal $d_{1,k_1}\in D_1,\ldots,d_{n,k_n}\in D_n$ is a basis of $E^\ast$.
    Choose the vertex of $P$ corresponding to the adjacent pairs
    $$
        \{F_{1,k_1},F_{1,k_1+1}\},\ldots,\{F_{n,k_n},F_{n,k_n+1}\}.
    $$
    The characteristic condition says that the restriction map $\row(\lambda)\to \Z_2^{2n}$ to these $2n$ facets is an isomorphism.
    Under this restriction, the subspace~$C$ maps to the subspace which is constant on each of the $n$ chosen adjacent pairs.
    Therefore, after quotienting by $C$, the induced map $E \to \Z_2^n$ is given by $[w]\mapsto \bigl(d_{1,k_1}([w]),\ldots,d_{n,k_n}([w])\bigr)$.
    This map is an isomorphism.
    This proves the claim.
    In particular, no element of any $D_i$ is zero.
        
    Apply Lemma~\ref{lem:colored triangularization} to $D_1,\ldots,D_n$.
    After permuting the polygon factors, there exists a basis $\varepsilon_1,\ldots,\varepsilon_n$ of~$E^\ast$ such that $D_i\subset\varepsilon_i+\langle \varepsilon_{i+1},\ldots,\varepsilon_n\rangle$ for all $i$.
    Let $\tau_1,\ldots,\tau_n$ be the dual basis of $E$, and choose arbitrary lifts $\widetilde\tau_i\in \row(\lambda)$ of $\tau_i$.
        
    Fix $i$.
    If $k>i$, then every element of $D_k$ lies in $\varepsilon_k+\langle\varepsilon_{k+1},\ldots,\varepsilon_n\rangle$.
    Hence every $d\in D_k$ satisfies $d(\tau_i)=0$.
    Thus, for every adjacent pair in the $k$-th polygon factor,
    $$
        \widetilde\tau_i(F_{k,j})+\widetilde\tau_i(F_{k,j+1})=0.
    $$
    Since the adjacency graph of a polygon is connected, the restriction of $\widetilde\tau_i$ to $I_k$ is constant.
    Hence it is either $0$ or $\chi_k|_{I_k}$.
    By adding suitable multiples of $\chi_k$, for~$k>i$, we may replace $\widetilde\tau_i$ by $\eta_i=\widetilde\tau_i+\sum_{k>i} c_{ik}\chi_k$ so that $\eta_i|_{I_k}=0$ for every $k>i$.
    Since the added terms lie in $C$, the class of $\eta_i$ in $E=\row(\lambda)/C$ is still $\tau_i$.
        
    Now consider the $i$th factor.
    Every element of $D_i$ has $\varepsilon_i$-coefficient equal to $1$.
    Hence, for every adjacent pair, $d_{i,j}(\tau_i)=1$.
    Therefore the two rows $\chi_i|_{I_i}$ and $\eta_i|_{I_i}$ define a characteristic matrix on the polygon $P_{m_i}$.
    Indeed, on every adjacent pair the two column vectors are $\binom{1}{a}$ and $\binom{1}{a+1}$, which are linearly independent over $\Z_2$.
        
    Finally, consider the following ordered basis of $\row(\lambda)$
    $$
            \chi_1,\eta_1,\chi_2,\eta_2,\ldots,\chi_n,\eta_n.
    $$
    It is a basis because $\chi_1,\ldots,\chi_n$ form a basis of~$C$, and $\eta_1,\ldots,\eta_n$ project to the basis $\tau_1,\ldots,\tau_n$ of $E=\row(\lambda)/C$.
        
    Let $\lambda'$ be the matrix whose rows are ordered as above.
    Since $\lambda'$ and $\lambda$ have the same row space, they differ by left multiplication by an element of $GL(2n,\Z_2)$.
    Hence they are Davis--Januszkiewicz equivalent.
        
    By construction, the row pair $(\chi_i,\eta_i)$ has zero restriction to $I_k$ whenever $k>i$.
    Therefore $\lambda'$ is block lower triangular.
    Moreover, the $i$th diagonal block is
    $$
        \Lambda_{ii}
        =
        \begin{pmatrix}
        \chi_i|_{I_i}\\
        \eta_i|_{I_i}
        \end{pmatrix},
    $$
    which is a characteristic matrix on $P_{m_i}$.
    The first row of $\Lambda_{ii}$ is $\one_{m_i}$.
    This proves the theorem.
\end{proof}

We now recall the block criterion for equivariant bundles in the real case.
This is the $\Z_2$-analogue of the corresponding criterion for quasitoric manifolds; see \cite[Lemma~3.2]{Choi-Park2016IJM}.

Here an equivariant bundle between small covers means a fiber bundle compatible with the locally standard $\Z_2$-torus actions, with respect to a surjective homomorphism of the acting groups.
In the product situation $B\times F$, we order the facets by
$$
    \fF(B\times F)=\fF(B)\sqcup \fF(F).
$$
A characteristic matrix on $B\times F$ is called a bundle block matrix over $B$ if, after splitting the rows according to $\dim B=b$ and $\dim F=f$, it has the form
$$
    \lambda=\begin{pmatrix} \Lambda_B & 0\\ * & \Lambda_F \end{pmatrix}.
$$
For such a matrix, the characteristic condition automatically implies that $\Lambda_B$ and $\Lambda_F$ are characteristic matrices on $B$ and $F$, respectively.

\begin{lemma}\label{lem:block-bundle-criterion}
    Let $\lambda$ be a bundle block matrix on $B\times F$.
    Then the projection $B\times F\to B$ induces an equivariant bundle $M(B\times F,\lambda) \to M(B,\Lambda_B)$ with fiber $M(F,\Lambda_F)$.
\end{lemma}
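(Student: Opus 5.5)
The plan is to build the bundle map directly from the quotient description $M=\RZ_P/\ker\lambda$. Let $m_B$ and $m_F$ be the numbers of facets of $B$ and $F$. Then $\RZ_{B\times F}\cong \RZ_B\times\RZ_F$, with $\Z_2^{m_B+m_F}=\Z_2^{m_B}\oplus\Z_2^{m_F}$ acting factorwise. Write $G=\ker\lambda$ and $G_B=\ker\Lambda_B\subset\Z_2^{m_B}$.

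The first step is to observe that the block shape forces the projection $p\colon\Z_2^{m_B}\oplus\Z_2^{m_F}\to\Z_2^{m_B}$ to carry $G$ into $G_B$. Indeed, for $(x,y)\in G$ the top $b$ rows of $\lambda(x,y)$ are just $\Lambda_B x$, because the upper-right block is zero, so $\Lambda_B x=0$. Hence the projection $\RZ_B\times\RZ_F\to\RZ_B$ descends to a map
$$
\pi\colon M(B\times F,\lambda)\to M(B,\Lambda_B).
$$
It is equivariant for the surjection $\Z_2^{b+f}\to\Z_2^b$ given by projection onto the first $b$ coordinates, and it covers the projection $B\times F\to B$.

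Next I would show $p|_G\colon G\to G_B$ is surjective with kernel $\ker\Lambda_F$ (placed in the $F$-coordinates). The kernel claim holds because if $x=0$ then $\lambda(0,y)=(0,\Lambda_F y)$. For surjectivity, take $x\in G_B$. Since $\Lambda_F$ is surjective (it is a characteristic matrix), we can choose $y$ with $\Lambda_F y=*\,x$, and then $(x,y)\in G$. This gives a short exact sequence $1\to\ker\Lambda_F\to G\to G_B\to1$.

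Now I would quotient in two stages. First divide $\RZ_B\times\RZ_F$ by $\ker\Lambda_F$, which acts only on the second factor, to get $\RZ_B\times M(F,\Lambda_F)$. The residual group $G/\ker\Lambda_F\cong G_B$ then acts diagonally: on $\RZ_B$ by the standard free action, and on $M(F,\Lambda_F)$ through a homomorphism $G_B\to\Z_2^{m_F}/\ker\Lambda_F\cong\Z_2^f$ coming from the chosen lifts $y$. This identifies
$$
M(B\times F,\lambda)\cong \RZ_B\times_{G_B} M(F,\Lambda_F),
$$
the fiber bundle associated to the principal $G_B$-covering $\RZ_B\to M(B,\Lambda_B)$. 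That covering is free and finite, hence a principal bundle, so the associated map is a fiber bundle with fiber $M(F,\Lambda_F)$. Equivariance holds because the $\Z_2^{b+f}$-action is induced from the commuting $\Z_2^{m_B+m_F}$-action.

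The main obstacle is making the two-stage quotient rigorous: one has to check that the residual $G_B$-action on $M(F,\Lambda_F)$ is well defined (independent of the choice of lifts modulo $\ker\Lambda_F$) and that the identification with the associated bundle is a diffeomorphism. Both reduce to the exactness above together with freeness of $G_B$ on $\RZ_B$, which holds by the result cited in the preliminaries.
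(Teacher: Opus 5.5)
Your proof is correct, but it reaches the conclusion by a different route from the paper's.

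The paper works in the Davis--Januszkiewicz model $(\Z_2^{b+f}\times B\times F)/\sim$ and defines $\Phi([(g_B,g_F),b,f])=[g_B,b]$ directly. It checks well-definedness face by face: columns of $\lambda$ for facets of $F$ project to $0$, and columns for facets of $B$ project to columns of $\Lambda_B$. It then only asserts that, over a local standard chart of the base, the identifications split into a base relation and a fiber relation.

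You work instead with $\RZ_B\times\RZ_F$ and the kernels. The block shape gives $p(G)\subset G_B$, and surjectivity of $\Lambda_F$ gives the exact sequence $1\to\ker\Lambda_F\to G\to G_B\to 1$. The two-stage quotient then identifies $M(B\times F,\lambda)$ with $\RZ_B\times_{G_B}M(F,\Lambda_F)$. Here $G_B$ acts on the fiber through $x\mapsto \ast\,x\in\Z_2^f$ (the lower-left block), which is well defined because two lifts of $x$ differ by an element of $\ker\Lambda_F$.

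What your route buys is that local triviality is no longer a chart-level claim. It is inherited from the principal $G_B$-covering $\RZ_B\to M(B,\Lambda_B)$. You also get extra information: the bundle is flat with finite structure group acting on the fiber by elements of $\Z_2^f$, and this action is given explicitly by the lower-left block. This viewpoint also matches the $\RZ_P/G$ description used elsewhere in the paper, for instance for the projective models.

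The paper's route is shorter and stays entirely in the orbit-space model, but it leaves the local product structure as a sketch. Your injectivity check makes that step rigorous: if $(p',[q'])=(xp,[yq])$, then $(p',q')=(x,ky)\cdot(p,q)$ with $k\in\ker\Lambda_F$. Together with compactness and freeness, this gives the required diffeomorphism.
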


\begin{proof}
    Write elements of the acting group as pairs $(g_B,g_F)\in \Z_2^b\oplus\Z_2^f$.
    Define a map
    $$
        \Phi\colon M(B\times F,\lambda)\longrightarrow M(B,\Lambda_B)
    $$
    by
    $$
        \Phi([(g_B,g_F),b,f])=[g_B,b].
    $$
    This is well-defined.
    Indeed, if two points in $\Z_2^{b+f}\times B\times F$ are identified over a face $E_B\times E_F$ of $B\times F$, then their group difference lies in the subgroup generated by the columns of $\lambda$ corresponding to facets containing $E_B\times E_F$.
    After projection to $\Z_2^b$, this difference lies in the subgroup generated by the columns of $\Lambda_B$ corresponding to the facets of $B$ containing $E_B$.
    Hence the images are identified in $M(B,\Lambda_B)$.
    
    The map $\Phi$ is compatible with the projection $\Z_2^{b+f}\to \Z_2^b$ of acting groups, and therefore is equivariant in the usual weak sense.
    Over a local standard chart of the base small cover, the quotient relation splits into the base relation determined by $\Lambda_B$ and the fiber relation determined by $\Lambda_F$.
    Thus $\Phi$ is locally a product with fiber $M(F,\Lambda_F)$.
    Hence $\Phi$ is an equivariant bundle with fiber $M(F,\Lambda_F)$.
\end{proof}

The following corollary follows immediately from Theorem~\ref{thm:blockize} and Lemma~\ref{lem:block-bundle-criterion}.
\begin{corollary} \label{cor:iterated-equivariant-bundle}
    Every factor-compatible small cover over a product of polygons is, after the combinatorial square regrouping described above and, for the resulting product of polygons, up to Davis--Januszkiewicz equivalence, an iterated equivariant bundle whose fibers are orientable small covers over polygons.
\end{corollary}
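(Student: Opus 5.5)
We first apply Theorem~\ref{thm:blockize}. After the fixed square regrouping of Definition~\ref{def:factor-compatible} and a permutation of the polygon factors, $\lambda$ is Davis--Januszkiewicz equivalent to a block lower triangular matrix $\lambda'$. Its diagonal blocks $\Lambda_{ii}$ are characteristic matrices on $P_{m_i}$ whose first row is $\one_{m_i}$. Since D--J equivalent small covers are weakly equivariantly homeomorphic over the identity of $P$, it suffices to prove the statement for $M(P,\lambda')$.

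We then induct on $n$ by peeling off the first factor. Write $P=B\times F$ with $B=P_{m_1}$ and $F=P_{m_2}\times\cdots\times P_{m_n}$, splitting the rows as $2+(2n-2)$. The block lower triangular shape means exactly that the first two rows vanish on the columns of $F$, so $\lambda'$ is a bundle block matrix over $B$. Here $\Lambda_B=\Lambda_{11}$, and $\Lambda_F$ is the lower right $(2n-2)\times(m-m_1)$ submatrix, which is again block lower triangular with diagonal blocks $\Lambda_{22},\ldots,\Lambda_{nn}$. Lemma~\ref{lem:block-bundle-criterion} then gives an equivariant bundle $M(P,\lambda')\to M(P_{m_1},\Lambda_{11})$ with fiber $M(F,\Lambda_F)$.

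For the induction to proceed, $\Lambda_F$ must again be of the same type. By the remark before Lemma~\ref{lem:block-bundle-criterion}, $\Lambda_F$ is a characteristic matrix on $F$. It is block lower triangular with polygonal characteristic diagonal blocks whose first rows are all-one vectors. The row of $\Lambda_F$ through the first row of $\Lambda_{ii}$ has the form $(\ast,\ldots,\ast,\one_{m_i},0,\ldots,0)$ rather than exactly $\chi_i$. I therefore do not claim factor-compatibility of $M(F,\Lambda_F)$; the induction only needs the block triangular form, which is inherited. So we formulate the inductive claim directly for block lower triangular characteristic matrices with polygonal diagonal blocks, and the case $n=1$ is trivial. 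Equivalently, we obtain the tower
$$
M(P,\lambda')=M_n\to M_{n-1}\to\cdots\to M_1=M(P_{m_n},\Lambda_{nn}),
$$
where the stage $M_k$ is $M(P_{m_{n-k+1}}\times\cdots\times P_{m_n},\ast)$. It is more natural to project onto the last factors, since the zero blocks sit in the upper right, so we read the bundle lemma with the base as the leading rows. Each stage is a bundle whose fiber is a small cover over a single polygon, and the fiber is one of the $M(P_{m_i},\Lambda_{ii})$. Equivalently, in the formulation above, the base is the polygon small cover and the fiber is the rest; either iteration gives the claimed iterated structure.

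Finally, we check orientability of the polygon pieces. The first row of $\Lambda_{ii}$ is $\one_{m_i}$, so $\one_{m_i}\in\row(\Lambda_{ii})$, and Corollary~\ref{cor:NN} shows that $M(P_{m_i},\Lambda_{ii})$ is orientable. The main point needing care is identifying the fibers correctly in the iteration. In the orientation where the base is $M(P_{m_1},\Lambda_{11})$, the fiber is a higher-dimensional small cover. To get single-polygon fibers at each stage, we apply Lemma~\ref{lem:block-bundle-criterion} with $B=P_{m_1}\times\cdots\times P_{m_{n-1}}$ and $F=P_{m_n}$. We then iterate on the base, whose matrix is the upper left block of $\lambda'$ and is again block lower triangular. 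This yields fibers $M(P_{m_n},\Lambda_{nn}),M(P_{m_{n-1}},\Lambda_{n-1,n-1}),\ldots$, all orientable.
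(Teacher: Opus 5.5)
Your final paragraph is correct and is essentially the paper's argument. Theorem~\ref{thm:blockize} gives a block lower triangular $\lambda'$. Lemma~\ref{lem:block-bundle-criterion} with $B=P_{m_1}\times\cdots\times P_{m_{n-1}}$ and $F=P_{m_n}$ gives $M_n\to M_{n-1}$ with fiber $M(P_{m_n},\Lambda_{nn})$. You then iterate on the upper-left block, and each fiber is orientable because $\one_{m_i}\in\row(\Lambda_{ii})$, via Corollary~\ref{cor:NN}.

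You should, however, delete the intermediate tower $M_n\to\cdots\to M_1=M(P_{m_n},\Lambda_{nn})$ with $M_k$ over $P_{m_{n-k+1}}\times\cdots\times P_{m_n}$, together with the claim that ``either iteration'' works, because that tower is false in general. A projection forgetting $P_{m_1}$ needs a $(2n-2)$-dimensional subspace of $\row(\lambda')$ vanishing on $I_1$, and the lower-left blocks $\Lambda_{k1}$ usually prevent this. For instance, on $P_4\times P_4$ take the rows $\chi_1$, $(0,1,0,1\,|\,0,0,0,0)$, $\chi_2$, $(1,0,0,0\,|\,0,1,0,1)$. This matrix is characteristic, block lower triangular, and factor-compatible. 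A combination $a r_1+b r_2+c r_3+d r_4$ restricts to $(a+d,a+b,a,a+b)$ on $I_1$, so the only nonzero row-space vector vanishing on $I_1$ is $\chi_2$. Hence no equivariant projection onto a small cover over the second factor exists. Likewise, taking $M(P_{m_1},\Lambda_{11})$ as the base yields a higher-dimensional fiber, not polygon fibers. Only the version with the base on the leading factors gives the claimed tower.
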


More precisely, after the square regrouping and the D--J equivalence of Theorem~\ref{thm:blockize}, there is a tower
$$
    M_n\longrightarrow M_{n-1}\longrightarrow\cdots \longrightarrow M_1\longrightarrow \{\mathrm{pt}\},
$$
where $M_n$ represents the regrouped small cover, which has the same underlying smooth manifold as the original one, and the fiber of $M_i\to M_{i-1}$ is the small cover $M(P_{m_i},\Lambda_{ii})$ over the $i$th polygon.
Each fiber $M(P_{m_i},\Lambda_{ii})$ is a closed orientable surface of genus~$\frac{m_i-2}{2}$ as seen in Section~\ref{sec-preliminaries}.

\begin{remark}
    The tower in Corollary~\ref{cor:iterated-equivariant-bundle} should be viewed as a surface-type analogue of a generalized real Bott tower.
    Recall that generalized real Bott manifolds arise as small covers over products of simplices and admit iterated equivariant bundle structures whose fibers are real projective spaces.
    In particular, every small cover over a product of simplices is Davis--Januszkiewicz equivalent to a generalized real Bott manifold; see \cite{Choi-Masuda-Suh2010Quasitoric}.
\end{remark}


\begin{thebibliography}{10}

\bibitem{Cai-Choi2021}
Li~Cai and Suyoung Choi, \emph{Integral cohomology groups of real toric
  manifolds and small covers}, Mosc. Math. J. \textbf{21} (2021), no.~3,
  467--492. \MR{4277852}

\bibitem{Choi2026}
Suyoung Choi, \emph{Symplectic small covers in dimension four}, preprint,
  arXiv:2605.01275, 2026.

\bibitem{Choi-Kaji-Theriault2017}
Suyoung Choi, Shizuo Kaji, and Stephen Theriault, \emph{Homotopy decomposition
  of a suspended real toric space}, Bol. Soc. Mat. Mex. (3) \textbf{23} (2017),
  no.~1, 153--161. \MR{3633130}

\bibitem{Choi-Masuda-Oum2017}
Suyoung Choi, Mikiya Masuda, and Sang-il Oum, \emph{Classification of real
  {B}ott manifolds and acyclic digraphs}, Trans. Amer. Math. Soc. \textbf{369}
  (2017), no.~4, 2987--3011. \MR{3592535}

\bibitem{Choi-Masuda-Suh2010Quasitoric}
Suyoung Choi, Mikiya Masuda, and Dong~Youp Suh, \emph{Quasitoric manifolds over
  a product of simplices}, Osaka J. Math. \textbf{47} (2010), no.~1, 109--129.
  \MR{2666127}

\bibitem{Choi-Masuda-Suh2010TAMS}
\bysame, \emph{Topological classification of generalized {B}ott towers}, Trans.
  Amer. Math. Soc. \textbf{362} (2010), no.~2, 1097--1112. \MR{2551516}

\bibitem{Choi-Masuda-Suh2011}
\bysame, \emph{Rigidity problems in toric topology: a survey}, Tr. Mat. Inst.
  Steklova \textbf{275} (2011), 188--201. \MR{2962979}

\bibitem{Choi-Park2017Cohomology}
Suyoung Choi and Hanchul Park, \emph{On the cohomology and their torsion of
  real toric objects}, Forum Math. \textbf{29} (2017), no.~3, 543--553.
  \MR{3641664}

\bibitem{Choi-Park2020}
\bysame, \emph{Multiplicative structure of the cohomology ring of real toric
  spaces}, Homology Homotopy Appl. \textbf{22} (2020), no.~1, 97--115.
  \MR{4027292}

\bibitem{Choi-Park2016IJM}
Suyoung Choi and Seonjeong Park, \emph{Projective bundles over toric surfaces},
  Internat. J. Math. \textbf{27} (2016), no.~4, 1650032, 30. \MR{3491049}

\bibitem{Davis-Januszkiewicz1991}
Michael~W. Davis and Tadeusz Januszkiewicz, \emph{Convex polytopes, {C}oxeter
  orbifolds and torus actions}, Duke Math. J. \textbf{62} (1991), no.~2,
  417--451. \MR{1104531 (92i:52012)}

\bibitem{Huybrechts2005book}
Daniel Huybrechts, \emph{Complex geometry}, Universitext, Springer-Verlag,
  Berlin, 2005, An introduction. \MR{2093043}

\bibitem{Ishida2011}
Hiroaki Ishida, \emph{Symplectic real {B}ott manifolds}, Proc. Amer. Math. Soc.
  \textbf{139} (2011), no.~8, 3009--3014. \MR{2801640}

\bibitem{Kamishima-Masuda2009}
Yoshinobu Kamishima and Mikiya Masuda, \emph{Cohomological rigidity of real
  {B}ott manifolds}, Algebr. Geom. Topol. \textbf{9} (2009), no.~4, 2479--2502.
  \MR{2576506}

\bibitem{Masuda2010}
M.~Masuda, \emph{Cohomological non-rigidity of generalized real {B}ott
  manifolds of height 2}, Tr. Mat. Inst. Steklova \textbf{268} (2010),
  252--257. \MR{2724345}

\bibitem{Masuda-Panov2008}
M.~Masuda and T.~E. Panov, \emph{Semi-free circle actions, {B}ott towers, and
  quasitoric manifolds}, Mat. Sb. \textbf{199} (2008), no.~8, 95--122.
  \MR{2452268}

\bibitem{Nakayama-Nishimura2005}
Hisashi Nakayama and Yasuzo Nishimura, \emph{The orientability of small covers
  and coloring simple polytopes}, Osaka J. Math. \textbf{42} (2005), no.~1,
  243--256. \MR{2132014}

\bibitem{Voisin2007book}
Claire Voisin, \emph{Hodge theory and complex algebraic geometry. {I}},
  {E}nglish ed., Cambridge Studies in Advanced Mathematics, vol.~76, Cambridge
  University Press, Cambridge, 2007, Translated from the French by Leila
  Schneps. \MR{2451566}

\end{thebibliography}

\providecommand{\bysame}{\leavevmode\hbox to3em{\hrulefill}\thinspace}
\providecommand{\MR}{\relax\ifhmode\unskip\space\fi MR }
\providecommand{\MRhref}[2]{%
  \href{http://www.ams.org/mathscinet-getitem?mr=#1}{#2}
}
\providecommand{\href}[2]{#2}

\end{document}